\newtheorem{thm}{Theorem}
\theoremstyle{definition}
\newtheorem{defin}{Definition}
\newtheorem{lemma}{Lemma}
\newtheorem{assumption}{Assumption}
\newcommand{\eps}{\varepsilon}
\newcommand{\R}{\mathbb{R}}
\newcommand{\pr}{\mathrm{P}}
\newcommand{\E}{\mathrm{E}}
\newcommand{\la}{\lambda}
\newcommand{\al}{\alpha}
\newcommand{\ba}{\begin{align}}
\newcommand{\ea}{\end{align}}
\begin{document}

\begin{frontmatter}

\pretitle{Research Article}

\title{Long-time dynamics of a stochastic density dependent predator-prey model with Holling II functional response and jumps}

\author[a]{\inits{Olg.}\fnms{Olga}~\snm{Borysenko}\thanksref{}\ead[label=e1]{olga\_borisenko@ukr.net}\orcid{0000-0000-0000-0000}}

\author[b]{\inits{O.}\fnms{Oleksandr}~\snm{Borysenko}\thanksref{cor1}\ead[label=e2]{odb@univ.kiev.ua}}
\thankstext[id=cor1]{Corresponding author.}
\address[a]{\institution{Department of Mathematical Physics,
National Technical University of Ukraine}, 37, Prosp.Peremohy, Kyiv, 03056, \cny{ Ukraine
}}

\address[b]{\institution{Department of Probability Theory, Statistics and Actuarial
Mathematics, Taras Shevchenko National University of Kyiv , Ukraine}, 64 Volodymyrska Str., Kyiv, 01601 \cny{Ukraine}}

\begin{abstract}
The existence and uniqueness of a global positive solution is proven for the system
of stochastic differential equations describing a nonautonomous stochastic density dependent predator-prey model with Holling-type II functional response disturbed by white noise, centered and non-centered Poisson noises. Sufficient conditions
are obtained for stochastic ultimate boundedness, stochastic permanence, non-persistence in the mean, weak persistence in the mean and extinction of a population densities in the considered stochastic predator-prey model.
\end{abstract}

\begin{keywords}
\kwd{Stochastic Predator-Prey Model}
\kwd{Predator Density Dependence}
\kwd{Holling-type II functional response}
\kwd{Global Solution}
\kwd{Stochastic Ultimate Boundedness}
\kwd{Stochastic Permanence}
\kwd{Extinction}
\kwd{Non-Persistence in the Mean}
\kwd{Weak Persistence in the Mean}
\end{keywords}

\begin{keywords}[MSC2010]%
\kwd{92D25}
\kwd{60H10}
\kwd{60H30}
\end{keywords}

\end{frontmatter}

\section{Introduction}\label{}

The deterministic autonomous Rosenzweig-Mac'Arthur model (\cite{Ian}) is a generalization of Voltera-Verhulst model in that the linear functional response is replaced by Holling II functional response. This model has a form
\begin{align}\label{eq1}
dx_1(t)=x_1(t)\left(a_1-bx_1(t)-\frac{cx_2(t)}{1+mx_1(t)}\right)dt,\nonumber \\
dx_2(t)=x_2(t)\left(-a_2+\frac{\kappa cx_1(t)}{1+mx_1(t)}\right)dt,\end{align}
where $x_1(t)$ and $x_2(t)$ are the prey and predator population densities at time $t$, respectively;  $a_1>0$ is the growth rate of prey $x_1$; $b>0$ measures the strength of competition among individuals of species $x_1$; $c$ is the maximum ingestion rate; $m>0$ is the half-saturation; $a_2>0$ is the death rate of predator $x_2$, and $\kappa>0$ is the conversion factor.

In the paper \cite{Liu} the stochastic version of model $(\ref{eq1})$ is considered in the following form
\begin{align}\label{eq2}
dx_1(t)=x_1(t)\left(a_1-bx_1(t)-\frac{cx_2(t)}{1+mx_1(t)}\right)dt+\sigma_1 x_1(t)dw_1(t),\nonumber \\
dx_2(t)=x_2(t)\left(-a_2+\frac{\kappa cx_1(t)}{1+mx_1(t)}\right)dt+\sigma_2 x_2(t)dw_2(t),\end{align}
where $w_1(t)$ and $w_2(t)$ are mutually independent Wiener processes. In \cite{Liu}  the authors proved that there is a unique positive solution to the system $(\ref{eq2})$, deduced the conditions that there is a stationary distribution of the system, which implies that the system is permanent, and obtained the sufficient conditions for the system that is going to be extinct.

Population systems may suffer abrupt environmental perturbations,
such as epidemics, fires, earthquakes, etc. So it is natural to introduce Poisson noises into the population model for
describing such discontinuous systems. There are considerable evidences in nature that predator species may be density dependent. So we need to take into account levels of predator density dependence.

In this paper, we consider the non-autonomous density dependent predator-prey model with Holling-type II functional response, disturbed by white noise and jumps generated by centered and non-centered Poisson measures. So, we take into account not only ``small'' jumps, corresponding to the centered Poisson measure, but also the ``large'' jumps, corresponding to the non-centered Poisson measure. This model is driven by the system of stochastic differential equations
\begin{align}\label{eq3}
dx_i(t)=x_i(t)\left[(-1)^{i-1}\left(a_{i}(t)-\frac{c_{i}(t)x_{3-i}(t)}{1+m(t)x_{1}(t)}\right)-b_{i}(t)x_{i}(t)\right]dt\nonumber\\ +\sigma_i(t)x_i(t)dw_i(t)\!\!
+\!\!\int\limits_{\mathbb{R}}\gamma_i(t,z)x_i(t^{-})\tilde\nu_1(dt,dz)\!+\!\!\int\limits_{\mathbb{R}}\delta_i(t,z)x_i(t^{-})\nu_2(dt,dz),\nonumber\\ x_i(0)=x_{i0}>0,\ i=1,2.
\end{align}
where $x_1(t)$ and $x_2(t)$ are the prey and predator population densities at time $t$, respectively, $b_2(t)>0$ is the predator density dependence rate, $c_2(t)=\kappa c_1(t)$, $w_i(t), i=1,2$ are independent standard one-dimensional Wiener processes, $\nu_i(t,A), i=1,2$ are independent Poisson measures, which are independent on $w_i(t),i=1,2$, $\tilde\nu_1(t,A)=\nu_1(t,A)-t\Pi_1(A)$,  $E[\nu_i(t,A)]=t\Pi_i(A),i=1,2$, $\Pi_i(A), i=1,2$ are finite measures on the Borel sets $A$ in $\mathbb{R}$.

To the best of our knowledge, there have been no papers devoted to the dynamical properties of the stochastic predator-prey model $(\ref{eq3})$, even in the case of centered Poisson noise. It is worth noting that the impact of centered and non-centered Poisson noises to the stochastic non-autonomous logistic model, to the stochastic two-species mutualism model and to the stochastic predator-prey model with modified version of Leslie-Gower term and Holling-type II functional response is studied in the papers \cite{Bor1} -- \cite{Bor4}.

In the following we will use the notations $X(t)=(x_1(t),x_2(t))$, $X_0=(x_{10},x_{20})$, $|X(t)|=\sqrt{x_1^2(t)+x_2^2(t)}$, $\R^2_{+}=\{X\in\R^2:\ x_1>0,x_2>0\}$,
\begin{align}\alpha_i(t)=a_{i}(t)+\int_{\R}\delta_{i}(t,z)\Pi_2(dz),\nonumber\\
\beta_i(t)\!=\!\frac{\sigma^2_i(t)}{2}\!+\!\!\!\int\limits_{\R}[\gamma_i(t,z)\!-\!\ln(1\!+\!\gamma_i(t,z))]\Pi_1(dz)\!
-\!\!\!\int\limits_{\R}\ln(1\!+\!\delta_i(t,z))\Pi_2(dz),
\nonumber
\end{align} $i=1,2$. For the bounded, continuous functions $f(t),f_i(t), t\in[0,+\infty), i=1,2$, let us denote
\begin{align}f_{\sup}=\sup_{t\ge0}f(t), f_{\inf}=\inf_{t\ge0}f(t), f_{i\sup}=\sup_{t\ge0}f_i(t), f_{i\inf}=\inf_{t\ge0}f_i(t), i=1,2,\nonumber\\ f_{\max}=\max\{f_{1\sup},f_{2\sup}\}, f_{\min}=\min\{f_{1\inf},f_{2\inf}\}.\nonumber\end{align}

We prove that the system $(\ref{eq3})$ has a unique, positive, global (no explosion
in a finite time) solution for any positive initial value, and that this solution is stochastically ultimately bounded. The sufficient conditions for stochastic permanence, non-persistence in the mean, weak persistence in the mean and extinction of solution are derived.

The rest of this paper is organized as follows. In Section 2, we prove the existence of the unique global positive solution to the system $(\ref{eq3})$ and derive some auxiliary results. In Section 3, we prove the stochastic ultimate boundedness of the solution to the system $(\ref{eq3})$, obtainig conditions under which the solution is stochastically permanent. The sufficient conditions for non-persistence in the mean, weak persistence in the mean and extinction of the solution are derived.

\section{Existence of global solution and some auxiliary lemmas}

Let $(\Omega,{\cal F},\pr)$ be a probability space, $w_i(t), i=1,2, t\ge0$ are independent standard one-dimensional Wiener processes on $(\Omega,{\cal F},\pr)$, and $\nu_i(t,A), i=1,2$ are independent Poisson measures defined on $(\Omega,{\cal F},\pr)$ independent on $w_i(t), i=1,2$. Here $\E[\nu_i(t,A)]=t\Pi_i(A), i=1,2$, $\tilde\nu_i(t,A)=\nu_i(t,A)-t\Pi_i(A), i=1,2$, $\Pi_i(\cdot), i=1,2$ are finite measures on the Borel sets in $\mathbb{R}$. On the probability space $(\Omega,{\cal F},\pr)$ we consider an increasing, right continuous family of complete sub-$\sigma$-algebras $\{{\cal F}_{t}\}_{t\ge0}$, where ${\cal F}_{t}=\sigma\{w_i(s),\nu_i(s,A), s\le t,i=1,2\}$.

We need the following assumption.

\begin{assumption} \label{ass1} It is assumed, that $a_{i}(t), b_i(t)$, $c_i(t), \sigma_i(t), \gamma_{i}(t,z), \delta_{i}(t,z), i=1,2$, $m(t)$ are bounded, continuous on $t$ functions, $a_{i}(t)>0,i=1,2$, $b_{i\inf}>0$, $c_{i\inf}>0, i=1,2$, $m_{\inf}>0$, and $\ln(1+\gamma_i(t,z)),\ln(1+\delta_i(t,z)), i=1,2$ are bounded, $\Pi_i(\R)<\infty, i=1,2$.\end{assumption}

In what follows we will assume that Assumption \ref{ass1} holds.
\begin{thm} \label{thm1}
There exists a unique global solution $X(t)$ to the system $(\ref{eq3})$ for any initial value $X(0)=X_0\in \R^2_{+}$, and
$\pr\{X(t)\in \R^2_{+}\}=1$.
\end{thm}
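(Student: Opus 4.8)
The plan is to follow the by-now standard localization-and-Lyapunov-function technique (Khasminskii, Mao), adapted to the jump-diffusion setting. Since all coefficients are bounded and continuous in $t$ by Assumption \ref{ass1}, and the nonlinearity $x_{3-i}/(1+m(t)x_1)$ is locally Lipschitz on $\R^2_+$ (the denominator stays bounded away from $0$ because $m_{\inf}>0$ and $x_1>0$), the system $(\ref{eq3})$ admits a unique local solution on a random interval $[0,\tau_e)$, where $\tau_e$ is the explosion time. The multiplicative structure $dx_i=x_i(\ldots)$ together with the jump factors $1+\gamma_i(t,z)>0$ and $1+\delta_i(t,z)>0$ -- guaranteed by the boundedness of $\ln(1+\gamma_i)$ and $\ln(1+\delta_i)$ -- ensures that a solution started in $\R^2_+$ stays in $\R^2_+$ up to $\tau_e$. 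It therefore remains only to prove that $\tau_e=\infty$ almost surely.

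First I would choose $n_0$ so large that $X_0\in[1/n_0,n_0]^2$, and for each integer $n\ge n_0$ define the stopping time $\tau_n=\inf\{t\in[0,\tau_e): x_1(t)\notin(1/n,n)\text{ or }x_2(t)\notin(1/n,n)\}$. These increase to some $\tau_\infty\le\tau_e$, and it suffices to show $\tau_\infty=\infty$ a.s. The Lyapunov candidate is
\begin{align}
V(X)=\sum_{i=1}^2\bigl(x_i-1-\ln x_i\bigr),\nonumber
\end{align}
which is nonnegative on $\R^2_+$ and tends to $+\infty$ whenever some coordinate approaches $0$ or $+\infty$.

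The heart of the argument is to apply the It\^o formula for jump-diffusions to $V$ and bound the generator $LV$ by a constant. For each summand $f(u)=u-1-\ln u$ one has $f'(u)=1-u^{-1}$ and $f''(u)=u^{-2}$; a short computation shows that the compensated and non-compensated jump contributions collapse to exactly $\gamma_i-\ln(1+\gamma_i)$ and $x_i\delta_i-\ln(1+\delta_i)$ under the respective measures, so the quantities $\beta_i(t)$ of the excerpt appear naturally. Collecting the drift, diffusion and jump parts yields, schematically,
\begin{align}
LV=\sum_{i=1}^2\Bigl[(x_i-1)\bigl((-1)^{i-1}a_i(t)-b_i(t)x_i\bigr)+\ldots\Bigr]+\text{(bounded terms)},\nonumber
\end{align}
together with the Holling-type cross terms. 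The crucial point is that the predation term in the predator equation, $c_2(t)x_1x_2/(1+m(t)x_1)$, is bounded above by $c_{2\sup}x_2/m_{\inf}$ because the functional response saturates; the predation term in the prey equation carries a favourable minus sign; and the non-centered jump contribution is only linear in $x_i$. Hence every positive term in $LV$ grows at most linearly in $x_1,x_2$, while the density-dependence terms $-b_1(t)x_1^2$ and $-b_2(t)x_2^2$ grow quadratically with $b_{i\inf}>0$. Completing the square in each variable therefore gives $LV\le K$ for a constant $K$ depending only on the suprema and infima of the coefficients.

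Finally, applying Dynkin's formula to $V(X(t\wedge\tau_n))$ gives $\E V(X(t\wedge\tau_n))\le V(X_0)+Kt$. On the event $\{\tau_n\le t\}$ the solution has a coordinate equal to $n$ or $1/n$, so $V(X(\tau_n))\ge\min\{n-1-\ln n,\ 1/n-1+\ln n\}\to\infty$; thus $\pr\{\tau_n\le t\}$ is squeezed to $0$ as $n\to\infty$, whence $\pr\{\tau_\infty\le t\}=0$ for every $t$ and $\tau_\infty=\infty$ a.s. I expect the main obstacle to be the third step, the verification that $LV$ is bounded: it hinges on using the saturation of the Holling-II response to tame the otherwise dangerous predator-prey cross term, and on exploiting the predator density dependence $b_{2\inf}>0$ to absorb the linear-in-$x_2$ remainder -- without this density-dependence assumption the quadratic control would fail.
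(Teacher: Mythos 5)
Your proposal is correct and follows the same overall strategy as the paper (localization by the stopping times $\tau_n$ plus the Lyapunov function $x-1-\ln x$ and the contradiction argument on $\{\tau_n\le T\}$), but it differs in two steps worth noting. First, for local existence and positivity the paper does not argue directly on $(\ref{eq3})$: it passes to the logarithmic variables $\xi_i=\ln x_i$, obtains a system with locally Lipschitz coefficients on all of $\R^2$, and recovers $x_i=e^{\xi_i}$, so positivity is automatic rather than asserted from the multiplicative structure; your version is workable but needs the extra care you only gesture at. Second, and more substantively, the paper uses the \emph{weighted} Lyapunov function $V(X)=\sum_i k_i(x_i-1-\ln x_i)$ and chooses $k_2=k_1/\kappa$ so that the dangerous cross term $c_1(t)x_1x_2(k_2\kappa-k_1)/(1+m(t)x_1)$ cancels identically (using $c_2=\kappa c_1$), whereas you take equal weights and instead bound the predator's gain term by $c_{2\sup}x_2/m_{\inf}$ via the saturation $x_1/(1+m(t)x_1)\le 1/m_{\inf}$, leaving a linear-in-$x_2$ remainder to be absorbed by $-b_{2\inf}x_2^2$. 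Both routes are valid here; yours leans on $m_{\inf}>0$ and the predator density dependence $b_{2\inf}>0$ (correctly identified as essential), while the paper's weight trick is the one that would survive even without the saturation bound and mirrors the classical Lotka--Volterra cancellation. The remaining steps (the bound $LV\le K$ by completing the square, Dynkin's formula, and the divergence of $\min\{n-1-\ln n,\ 1/n-1+\ln n\}$) coincide with the paper's.
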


\begin{proof}
Let us consider the system of stochastic differential equations
\begin{align}\label{eq4}
d\xi_i(t)=\left[(-1)^{i-1}\left(a_{i}(t)-\frac{c_{i}(t)\exp\{\xi_{3-i}(t)\}}{1+m(t)\exp\{\xi_1(t)\}}\right)-b_{i}(t)\exp\{\xi_{i}(t)\}\right.
\nonumber \\ \left.
-\beta_i(t)\rule{0pt}{14pt}\right]dt+\sigma_i(t)dw_i(t)
+\int\limits_{\mathbb{R}}\ln(1+\gamma_i(t,z))\tilde\nu_1(dt,dz)\nonumber\\+\int\limits_{\mathbb{R}}\ln(1+\delta_i(t,z))\tilde\nu_2(dt,dz),\quad \xi_i(0)=\ln x_{i0},\ i=1,2.
\end{align}
The coefficients of the system $(\ref{eq4})$ are local Lipschitz continuous. So, for any initial value $(\xi_1(0),\xi_2(0))$ there exists a unique local solution $\Xi(t)=(\xi_1(t),\xi_2(t))$ on $[0,\tau_{e})$, where $\sup_{t<\tau_{e}}|\Xi(t)|=+\infty$ (cf. Theorem 6, p.246, \cite{GikhSkor}). Therefore, from the It\^{o}'s formula we derive that the process $X(t)=(\exp\{\xi_1(t)\},\linebreak\exp\{\xi_2(t)\})$ is a unique, positive local solution to the system (\ref{eq3}). To show this solution is global, we need to show that $\tau_{e}=+\infty$ a.s. Let $n_0\in\mathbb{N}$ be sufficiently large for $x_{i0}\in[1/n_0,n_0], i=1,2$. For any $n\ge n_0$ we define the stopping time
\begin{align}
\tau_n=\inf\left\{t\in[0,\tau_e):\ X(t)\notin\left(\frac{1}{n},n\right)\times\left(\frac{1}{n},n\right)\right\}.\nonumber
\end{align}
 It is easy to see that $\tau_n$ is increasing as $n\to+\infty$. Denote $\tau_{\infty}=\lim_{n\to\infty}\tau_n$, whence $\tau_{\infty}\le\tau_e$ a.s. If we prove that $\tau_\infty=\infty$ a.s., then $\tau_e=\infty$ a.s. and $X(t)\in \R^2_{+}$ a.s. for all $t\in[0,+\infty)$. So we need to show that $\tau_\infty=\infty$ a.s. If it is not true, there are constants $T>0$ and $\varepsilon\in(0,1)$, such that $\pr\{\tau_{\infty}<T\}>\varepsilon$. Hence, there is $n_1\ge n_0$ such that
\begin{align}\label{eq5}\pr\{\tau_{n}<T\}>\varepsilon,\quad \forall n\ge n_1.\end{align}
For the non-negative function $V(X)=\sum\limits_{i=1}^2 k_{i}(x_i-1-\ln x_i)$, $X=(x_1,x_2)$, $x_i>0$, $k_{i}>0$, $i=1,2$
by the It\^{o}'s formula we obtain
\begin{align}\label{eq6}
dV(X(t))=\sum_{i=1}^2 k_i \left\{\rule{0pt}{20pt}(-1)^{i-1}(x_i(t)-1)\left(a_i(t)-\frac{c_{i}(t)x_{3-i}(t)}{1+m(t)x_{1}(t)}\right)\right.\nonumber\\ \left.-b_i(t)x_i(t)(x_i(t)-1)+\beta_i(t)+\int\limits_{\R}\delta_i(t,z)x_i(t)\Pi_2(dz)\right\}dt\nonumber\\ \!+\!\sum_{i=1}^2 k_i\!\left\{\! \rule{0pt}{20pt}(x_i(t)\!-\!1)\sigma_i(t)dw_i(t)\right.\!\!
+\!\!\int\limits_{\R}\![\gamma_i(t,z)x_i(t^{-})\!-\!\ln(1\!+\!\gamma_i(t,z))]\tilde\nu_1(dt,dz)\nonumber\\ \left.\!+\!
\int\limits_{\R}[\delta_i(t,z)x_i(t^{-})\!-\!\ln(1\!+\!\delta_i(t,z))]\tilde\nu_2(dt,dz)\right\}.
\end{align}
Let us consider the function
\begin{align}
f(t,x_1,x_2)=-k_1 b_1(t)x_1^2+k_1\left(\rule{0pt}{12pt}\alpha_1(t)+b_1(t)\right)x_1+k_1(\beta_1(t)-a_1(t)) \nonumber\\
-k_2b_2(t)x_2^2+k_2\left(-a_2(t)+b_2(t)+\int_{\R}\delta_2(t,z)\Pi_2(dz)\right)x_2+k_2(\beta_2(t)+a_2(t))\nonumber\\
\nonumber\\ +\frac{k_2(x_2-1)c_2(t)x_1-k_1(x_1-1)c_1(t)x_2}{1+m(t)x_1}, x_i>0, i=1,2.\nonumber
\end{align}
We have estimate
\begin{align}
f(t,x_1,x_2)\le \sum_{i=1}^2 k_i(\rule{0pt}{14pt}-b_{i\inf}x_i^2+(\alpha_{i\sup}+b_{i\sup})x_i)+k_1c_{1\sup}x_2\nonumber\\
+k_1(\beta_{1\sup}-a_{1\inf})+k_2(\beta_{2\sup}+a_{2\sup})+\frac{c_1(t)x_1x_2(k_2\kappa-k_1)}{1+m(t)x_1}\nonumber
\end{align}
So for $k_2=k_1/\kappa$ there is a constant $L(k_1,k_2)>0$, such that $f(t,x_1,x_2)\le L(k_1,k_2)$.
From $(\ref{eq6})$ we obtain by integrating
\begin{align}\label{eq7}
V(X(T\wedge\tau_n))\le V(X_0)+L(k_1,k_2)(T\wedge\tau_n)\nonumber\\+\sum_{i=1}^2k_{i}\left\{\int\limits_0^{T\wedge\tau_n}(x_i(t)-1)\sigma_i(t)dw_i(t)+
\int\limits_0^{T\wedge\tau_n}\!\!\!\int\limits_{\R}\left[\gamma_i(t,z)x_i(t^{-})\!-\!\ln(1\right.\right.\nonumber\\ \left.\left.+\gamma_i(t,z))\right]\tilde\nu_1(dt,dz)+
\int\limits_0^{T\wedge\tau_n}\!\!\!\int\limits_{\R}[\delta_i(t,z)x_i(t^{-})\!-\!\ln(1\!+\!\delta_i(t,z))]\tilde\nu_2(dt,dz)\right\}.
\end{align}
Taking expectations we derive from $(\ref{eq7})$
\begin{align}\label{eq8}
\E\left[V(X(T\wedge\tau_n))\right]\le V(X_0)+L(k_1,k_2)T.
\end{align}
Set $\Omega_n=\{\tau_n\le T\}$ for $n\ge n_1$. Then by $(\ref{eq5})$, $\pr(\Omega_n)=\pr\{\tau_n\le T\}>\varepsilon$, $\forall n\ge n_1$. Note that for every $\omega\in\Omega_n$ at least one of $x_1(\tau_n,\omega)$ and $x_2(\tau_n,\omega)$ equals either $n$ or $1/n$. So
\begin{align}
V(X(\tau_n))\ge\min\{k_1,k_2\} \min\left\{n-1-\ln n,\frac{1}{n}-1+\ln n\right\}.\nonumber
\end{align}
From $(\ref{eq8})$ it follows
\begin{align}
V(X_0)+L(k_1,k_2)T\ge \E[\mathbf{1}_{\Omega_n}V(X(\tau_n))]\nonumber\\ \ge\varepsilon\min\{k_1,k_2\}\min\left\{n-1-\ln n,\frac{1}{n}-1+\ln n\right\},\nonumber
\end{align}
where $\mathbf{1}_{\Omega_n}$ is the indicator function of $\Omega_n$. Letting $n\to\infty$ leads to the contradiction $\infty>V(X_0)+L(k_1,k_2)T=\infty$. This completes the proof of the theorem.
\end{proof}

\begin{lemma}\label{lm1}
The density of the population $x_i(t), i=1,2$ obeys
\begin{align}
\limsup_{t\to\infty}\frac{\ln x_i(t)}{t}\le0,\ i=1,2\qquad\hbox{a.s.}\nonumber
\end{align}
\end{lemma}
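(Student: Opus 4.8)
The plan is to reduce the statement to an estimate on $\ln x_i(t)$, bound the drift of this logarithm from above, and then pass from an averaged (first-moment) bound to the pathwise $\limsup$ via Borel--Cantelli. Since the global solution of Theorem~\ref{thm1} is $X(t)=(\exp\{\xi_1(t)\},\exp\{\xi_2(t)\})$ with $\Xi$ solving $(\ref{eq4})$, equation $(\ref{eq4})$ is already the It\^{o} differential of $\ln x_i(t)=\xi_i(t)$; integrating it yields
\[
\ln x_i(t)=\ln x_{i0}+\int_0^t G_i(s)\,ds+M_i(t),
\]
where $G_i(s)=(-1)^{i-1}\big(a_i(s)-c_i(s)x_{3-i}(s)/(1+m(s)x_1(s))\big)-b_i(s)x_i(s)-\beta_i(s)$ and $M_i$ collects the Wiener integral and the two compensated Poisson integrals, hence is a local martingale. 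Because the solution is positive, for the prey the functional-response and competition terms are nonpositive, while for the predator the Holling term satisfies $c_2(s)x_1/(1+m(s)x_1)\le c_{2\sup}/m_{\inf}$; in both cases one gets $G_i(s)\le A_i-b_{i\inf}x_i(s)$ for a constant $A_i$. It is worth stressing that the crude bound $G_i\le A_i$ gives only $\limsup_{t\to\infty}\ln x_i(t)/t\le A_i$, which is strictly positive whenever $A_i>0$; the value $0$ can be reached only by retaining and exploiting the self-regulating term $-b_{i\inf}x_i(s)$, as the feedback-free (geometric-Brownian-motion) case shows.

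I capture this feedback through a uniform first-moment bound. Applying It\^{o}'s formula to $x_i(t)$ itself, taking expectations along the localizing sequence $\tau_n$ of Theorem~\ref{thm1} (so the $dw_i$- and $\tilde\nu_1$-integrals vanish and only the $\nu_2$-compensator $\int_\R\delta_i(s,z)\Pi_2(dz)$ contributes), and again discarding the sign-definite interaction terms, I obtain a logistic-type differential inequality
\[
\frac{d}{dt}\E[x_i(t)]\le \hat A_i\,\E[x_i(t)]-b_{i\inf}\E[x_i^2(t)]\le \hat A_i\,\E[x_i(t)]-b_{i\inf}\big(\E[x_i(t)]\big)^2,
\]
the last inequality by Jensen. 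Comparison with the logistic ODE then gives $\limsup_{t\to\infty}\E[x_i(t)]\le \hat A_i/b_{i\inf}$, so that $K_i:=\sup_{t\ge0}\E[x_i(t)]<\infty$.

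With the moment bound in hand I argue pathwise. For fixed $\eps>0$, Chebyshev's inequality gives $\pr\{x_i(k)>k^{1+\eps}\}\le K_i k^{-(1+\eps)}$, a convergent series in $k\in\mathbb{N}$, so by Borel--Cantelli almost surely $\ln x_i(k)\le(1+\eps)\ln k$ for all large $k$. To fill the gaps $t\in[k,k+1]$ I write $\ln x_i(t)=\ln x_i(k)+\int_k^t G_i\,ds+\big(M_i(t)-M_i(k)\big)$, estimate the drift over a unit interval by the constant $A_i$, and control $\sup_{k\le t\le k+1}|M_i(t)-M_i(k)|$ by the Burkholder--Davis--Gundy inequality: boundedness of $\sigma_i$, $\ln(1+\gamma_i)$, $\ln(1+\delta_i)$ together with $\Pi_i(\R)<\infty$ makes the predictable quadratic variation of $M_i$ grow at most linearly, so its increment over $[k,k+1]$ has second moment bounded uniformly in $k$. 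A second Chebyshev--Borel--Cantelli step then yields $\sup_{k\le t\le k+1}|M_i(t)-M_i(k)|\le k^{1/2+\eps}$ eventually, whence $\sup_{k\le t\le k+1}\ln x_i(t)\le(1+\eps)\ln k+A_i+k^{1/2+\eps}$ for all large $k$. Dividing by $t\ge k$, letting $k\to\infty$ and then $\eps\downarrow0$ gives $\limsup_{t\to\infty}\ln x_i(t)/t\le0$.

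The main obstacle is precisely this transition from the averaged estimate to the almost-sure one. The long-time control at integer times rests on the feedback term, and hence on the moment bound, whereas the short-time gap-filling must keep the two compensated Poisson integrals under control simultaneously with the Wiener part; here Assumption~\ref{ass1} (boundedness of $\ln(1+\gamma_i)$, $\ln(1+\delta_i)$ and finiteness of $\Pi_1,\Pi_2$) is exactly what forces the quadratic variation to grow linearly and rescues the Borel--Cantelli argument. The localization needed to legitimize taking expectations in the moment bound is routine and is handled by Fatou's lemma.
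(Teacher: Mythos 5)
Your proof is correct, but it follows a genuinely different route from the paper's. The paper applies It\^{o}'s formula to $e^{t}\ln x_i(t)$ and controls the stochastic term $\psi_i(t)$ by the exponential martingale inequality for jump processes (Lemma 2.2 of \cite{Bor1}) with the parameter choice $T=k\tau$, $\mu=e^{-k\tau}$, $\beta=\theta e^{k\tau}\ln k$, followed by Borel--Cantelli and the elementary bound $x^{r}\le 1+r(x-1)$; the self-regulation then enters through the observation that $\ln x - b_{i\inf}x + K_i$ is bounded above, which yields the intermediate (and slightly stronger) statement $\limsup_{t\to\infty}\ln x_i(t)/\ln t\le 1$. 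You instead route everything through a uniform first-moment bound $\sup_{t}\E[x_i(t)]<\infty$, then use Markov plus Borel--Cantelli at integer times and a Doob/BDG maximal inequality with a second Borel--Cantelli step to fill the gaps $[k,k+1]$; this is more elementary in that it avoids the exponential inequality for jump martingales, at the price of an extra moment estimate --- which, in the context of the paper, is exactly Lemma \ref{lm2} with $p=1$ (stated after Lemma \ref{lm1} but proved independently of it, so no circularity). The one place where you are a little quick is the derivation of the uniform moment bound: after localizing by $\tau_n$, the negative term $-b_{i\inf}\E[x_i^2]$ is on the wrong side for Fatou, and turning the resulting integral inequality into a differentiable logistic comparison needs justification; the cleanest repair is the paper's own device in Lemma \ref{lm2}, namely applying It\^{o} to $e^{t}x_i(t)$ and bounding the entire drift $e^{s}(x_i+\hat A_i x_i-b_{i\inf}x_i^2)$ by a constant multiple of $e^{s}$ before taking expectations. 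With that adjustment your argument is complete, and all remaining steps (boundedness of $\sigma_i$, $\ln(1+\gamma_i)$, $\ln(1+\delta_i)$ and finiteness of $\Pi_1,\Pi_2$ giving a deterministic, uniformly bounded angle bracket increment over unit intervals) are sound.
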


\begin{proof} By the It\^{o}'s formula we have for $i=1,2$
\begin{align}\label{eq9}
e^t\ln x_i(t)-\ln x_{i0}=\int\limits_0^te^s\left\{\ln x_i(s)+(-1)^{i-1}\left[a_i(s)-\frac{c_i(s)x_{3-i}(s)}{1+m(s)x_1(s)}\right]\right. \nonumber \\ \left.-b_i(s)x_i(s)-\frac{\sigma_i^2(s)}{2}+\int\limits_{\R}[\ln(1+\gamma_i(s,z))-\gamma_i(s,z)]\Pi_1(dz) \right\}ds+\psi_i(t),
\end{align}
where
\ba
\psi_i(t)=\int\limits_0^te^s\sigma_i(s)dw_i(s)+\int\limits_0^t\!\!\int\limits_{\R}e^s\ln(1+\gamma_i(s,z))\tilde\nu_1(ds,dz)\nonumber \\+
\int\limits_0^t\!\!\int\limits_{\R}e^s\ln(1+\delta_i(s,z))\nu_2(ds,dz),\ i=1,2.\nonumber
\end{align}

By virtue of the exponential inequality (\cite{Bor1}, Lemma 2.2) we have
\ba
\pr\left\{\sup_{0\le t\le T}\zeta_{i}(\mu,t)>\beta\right\}\le e^{-\mu\beta}, \ \forall 0<\mu\le1,\ \beta>0,\ i=1,2\nonumber
\end{align}
where
\ba
\zeta_{i}(\mu,t)=\psi_i(t)-\frac{\mu}{2}\int\limits_0^te^{2s}\sigma_i^2(s)ds-
\frac{1}{\mu}\int\limits_0^t\!\!\int\limits_{\R}\left[(1+\gamma_i(s,z))^{\mu e^s}-1 \right.\nonumber\\ \left.-\mu e^s\ln(1\!+\!\gamma_i(s,z))\rule{0pt}{14pt}\right]\Pi_1(dz)ds\!
-\!\frac{1}{\mu}\int\limits_0^t\!\!\int\limits_{\R}\left[(1\!+\!\delta_i(s,z))^{\mu e^s}\!-\!1\right]\Pi_2(dz)ds,\nonumber
\end{align}
$i=1,2$.
Choosing $T=k\tau, k\in \mathbb{N}, \tau>0, \mu=e^{-k\tau},\beta=\theta e^{k\tau}\ln k$,  $\theta>1$ we get
\ba
\pr\left\{\sup_{0\le t\le k\tau}\zeta_{i}(\mu,t)>\theta e^{k\tau}\ln k\right\}\le \frac{1}{k^{\theta}},\ i=1,2.\nonumber
\end{align}

By the Borel-Cantelli lemma for almost all $\omega\in\Omega$, there is a random integer $k_0(\omega)$, such that for $\forall k\ge k_0(\omega)$ and $0\le t\le k\tau$
\begin{align}\label{eq10}
\psi_i(t)\le \frac{1}{2e^{k\tau}}\int\limits_0^te^{2s}\sigma_i^2(s)ds+
e^{k\tau}\int\limits_0^t\!\!\int\limits_{\R}\left[(1+\gamma_i(s,z))^{e^{s-k\tau}}-1\right.\nonumber\\ \left.- e^{s-k\tau}\ln(1+\gamma_i(s,z))\right]\Pi_1(dz)ds
+e^{k\tau}\int\limits_0^t\!\!\int\limits_{\R}\left[(1+\delta_i(s,z))^{e^{s-k\tau}}\right.\nonumber\\\left.-1\rule{0pt}{14pt}\right]\Pi_2(dz)ds+\theta e^{k\tau}\ln k,\ i=1,2.
\end{align}
Appling the inequality $x^r\le 1+r(x-1)$, $\forall x\ge0$, $0\le r\le1$  with $x=1+\gamma_i(s,z)$, $r=e^{s-k\tau}$, then with $x=1+\delta_i(s,z)$, $r=e^{s-k\tau}$,  we derive from $(\ref{eq10})$ the estimates

\begin{align}\label{eq11}
\psi_i(t)\!\le\! \frac{1}{2e^{k\tau}}\int\limits_0^te^{2s}\sigma_i^2(s)ds\!+\!
\int\limits_0^t\!\!\int\limits_{\R}e^s\left[\gamma_i(s,z)\!-\!\ln(1\!+\!\gamma_i(s,z))\right]\Pi_1(dz)ds\nonumber\\ +
\int\limits_0^t\!\!\int\limits_{\R}e^s\delta_i(s,z)\Pi_2(dz)ds +\theta e^{k\tau}\ln k,\ i=1,2.
\end{align}
So from $(\ref{eq9})$ and $(\ref{eq11})$ we get for $i=1,2$
\ba
e^t\ln x_i(t)\le\ln x_{i0}+\int\limits_0^te^s\left\{\ln x_i(s)+(-1)^{i-1}\left[a_i(s)-\frac{c_i(s)x_{3-i}(s)}{1+m(s)x_1(s)}\right]\right. \nonumber \\ \left.-b_i(s)x_i(s)-\frac{\sigma_i^2(s)}{2}\left(1-e^{s-k\tau} \right)+\int\limits_{\R}\delta_i(s,z)\Pi_2(dz) \right\}ds+\theta e^{k\tau}\ln k\nonumber \\\le \ln x_{i0}+\int\limits_0^te^s[\ln x_i(s)-b_{i\inf}x_i(s)+K_i]ds+\theta e^{k\tau}\ln k\nonumber \\\le \ln x_{i0}+L(e^t-1)+\theta e^{k\tau}\ln k, \forall k\ge k_0(\omega),
0\le t\le k\tau,\nonumber
\end{align}
for some constant $L>0$, where $K_1=\al_{1\sup}, K_2=\sup_{t\ge0}\int_{\R}\delta_2(t,z)\Pi_2(dz)+c_{2\sup}/m_{\inf}$.

So for any $(k-1)\tau\le t\le k\tau$, $\forall k\ge k_0(\omega)$ we have
\ba
\frac{\ln x_i(t)}{\ln t}\le e^{-t}\frac{\ln x_{i0}}{\ln t}+\frac{L}{\ln t}(1-e^{-t})+\frac{\theta e^{k\tau}\ln k}{e^{(k-1)\tau}\ln(k-1)\tau}, i=1,2 \quad \hbox{a.s.}\nonumber
\end{align}
Therefore
\ba
\limsup_{t\to\infty}\frac{\ln x_i(t)}{\ln t}\le\theta e^{\tau},\ i=1,2,\ \forall \theta>1,\ \forall\tau>0, \quad \hbox{a.s.}\nonumber
\end{align}
If $\theta\downarrow1$, $\tau\downarrow0$, then we obtain
\ba
\limsup_{t\to\infty}\frac{\ln x_i(t)}{\ln t}\le 1, \ i=1,2 \quad \hbox{a.s.}\nonumber
\end{align}
So
\ba
\limsup_{t\to\infty}\frac{\ln x_i(t)}{t}\le 0, \ i=1,2 \quad \hbox{a.s.}\nonumber
\end{align}
\end{proof}

\begin{lemma}\label{lm2}
Let $p>0$.Then for any initial value $x_{i0}>0, i=1,2$ we have
\ba
\limsup_{t\to\infty}\E\left[x_i^p(t)\right]\le K_i(p),\ i=1,2,\nonumber
\end{align}
where $K_i(p)>0, i=1,2$ are some constants depending on $p$.
\end{lemma}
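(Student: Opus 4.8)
The plan is to control the growth of $\E[x_i^p(t)]$ by applying the It\^o formula to the auxiliary process $e^t x_i^p(t)$ and showing that its drift is dominated by a deterministic constant. First I would compute, by the It\^o formula for the jump-diffusion $(\ref{eq3})$,
\begin{align}
d\left(e^t x_i^p(t)\right)=e^t x_i^p(t)\, g_i(t,X(t))\,dt + dM_i(t),\nonumber
\end{align}
where $M_i$ is a local martingale collecting the $dw_i$, $\tilde\nu_1$ and $\tilde\nu_2$ stochastic integrals, and
\begin{align}
g_i(t,X)=1+p(-1)^{i-1}\left(a_i(t)-\frac{c_i(t)x_{3-i}}{1+m(t)x_1}\right)-pb_i(t)x_i+\frac{p(p-1)}{2}\sigma_i^2(t)\nonumber\\
+\int\limits_{\R}\left[(1+\gamma_i(t,z))^p-1-p\gamma_i(t,z)\right]\Pi_1(dz)+\int\limits_{\R}\left[(1+\delta_i(t,z))^p-1\right]\Pi_2(dz).\nonumber
\end{align}
The non-centered measure $\nu_2$ contributes the compensator term $\int_{\R}[(1+\delta_i)^p-1]\Pi_2(dz)$, while the centered measure $\tilde\nu_1$ produces the term carrying the extra $-p\gamma_i$ correction coming from the centering.

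The key step is to bound $x_i^p g_i$ above by a constant uniformly in $t$. Under Assumption \ref{ass1} the quantities $1+\gamma_i(t,z)$ and $1+\delta_i(t,z)$ are bounded and bounded away from $0$, and $\Pi_i(\R)<\infty$, so both jump integrals are bounded functions of $t$. For the prey ($i=1$) the predation term $-p\,c_1(t)x_2/(1+m(t)x_1)$ is non-positive and may simply be discarded; for the predator ($i=2$) the Holling-type II response is bounded, $c_2(t)x_1/(1+m(t)x_1)\le c_{2\sup}/m_{\inf}$, so the corresponding term is bounded as well. Collecting all bounded contributions into a constant $C_i>0$, I obtain $g_i(t,X)\le C_i-pb_{i\inf}x_i$, hence
\begin{align}
x_i^p\, g_i(t,X)\le C_i x_i^p - pb_{i\inf}x_i^{p+1}\le H_i,\nonumber
\end{align}
where $H_i>0$ is the finite maximum over $x_i>0$ of the right-hand polynomial, whose negative leading coefficient guarantees it is bounded above. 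Thus the drift of $e^t x_i^p(t)$ is at most $e^t H_i$.

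To make the expectation argument rigorous I would localize with the stopping times $\tau_n$ introduced in the proof of Theorem \ref{thm1}. On $[0,t\wedge\tau_n]$ the process $X$ stays in $(1/n,n)^2$, so $M_i(\cdot\wedge\tau_n)$ is a genuine martingale with zero mean, and integrating the drift estimate yields
\begin{align}
\E\left[e^{t\wedge\tau_n}x_i^p(t\wedge\tau_n)\right]\le x_{i0}^p + H_i\,\E\left[e^{t\wedge\tau_n}-1\right]\le x_{i0}^p + H_i(e^t-1).\nonumber
\end{align}
Since $\tau_n\to\infty$ a.s. by Theorem \ref{thm1}, letting $n\to\infty$ and applying Fatou's lemma gives $\E[e^t x_i^p(t)]\le x_{i0}^p + H_i(e^t-1)$, whence
\begin{align}
\E\left[x_i^p(t)\right]\le e^{-t}x_{i0}^p + H_i(1-e^{-t}).\nonumber
\end{align}
Taking $\limsup_{t\to\infty}$ yields the claim with $K_i(p)=H_i$.

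The main obstacle I anticipate is purely computational: getting the jump compensator terms in $g_i$ exactly right for the \emph{centered} noise $\tilde\nu_1$ (the $-p\gamma_i$ correction) as opposed to the \emph{non-centered} noise $\nu_2$, and verifying that the resulting integrals are genuinely bounded under Assumption \ref{ass1}. Once the inequality $g_i\le C_i-pb_{i\inf}x_i$ is established, the density-dependence term $-pb_{i\inf}x_i^{p+1}$ dominates every other contribution and the remainder is a routine localization-and-Fatou argument; the essential structural input is the boundedness of the Holling-type II functional response, which is exactly what prevents the predator moment from blowing up.
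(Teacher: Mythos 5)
Your proposal is correct and follows essentially the same route as the paper: Itô's formula applied to $e^t x_i^p(t)$, a uniform bound $e^s x_i^p g_i \le e^s K_i(p)$ obtained from the negative term $-pb_{i\inf}x_i^{p+1}$ together with the boundedness of the Holling response and the jump compensators, then localization by $\tau_n$ and passage to the limit. Your write-up is in fact slightly more explicit than the paper's about how the constant $K_i(p)$ arises.
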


\begin{proof} Let $\tau_n$ be the stopping time defined in Theorem \ref{thm1}. Applying the It\^{o}'s formula to the process $V(t,x_i(t))=e^t x_i^p(t)$, $i=1,2, p>0$, we obtain for $i=1,2$
\ba\label{eq12}
V(t\wedge\tau_n,x_i(t\wedge\tau_n))=x_{i0}^p\!\nonumber\\+\!\!\!\!\!\int\limits_0^{t\wedge\tau_n}\!\!\!\!e^sx_i^p(s)\!\!\left\{\!\rule{0pt}{18pt}1\!+\!p\left[(-1)^{i-1}\rule{0pt}{16pt}
\!\!\left(a_i(s)\!-\!\frac{c_i(s)x_{3-i}(s)}{1+m(s)x_1(s)}\right) \left.\rule{0pt}{16pt}-b_i(s)x_i(s)\right]\right.\right.\nonumber\\
+\frac{p(p-1)\sigma_i^2(s)}{2}+\int\limits_{\R}\left[(1+\gamma_i(s,z))^p-1-p\gamma_i(s,z)\right]\Pi_1(dz)\nonumber\\ \left.
+\!\!\!\int\limits_{\R}\!\left[(1\!+\!\delta_i(s,z))^p\!-\!1\right]\Pi_2(dz)\right\}ds\!+\!\!\!\!\int\limits_0^{t\wedge\tau_n}\!\!\!\!p e^sx_i^p(s)\sigma_i(s)dw_i(s)\!\nonumber\\
+\!\!\!\!\int\limits_0^{t\wedge\tau_n}\!\!\!\!\int\limits_{\R}\!\!e^sx_i^p(s^{-})\left[(1\!+\!\gamma_i(s,z))^p\!-\!1\right]\tilde\nu_1(ds,dz)\nonumber\\
+\int\limits_0^{t\wedge\tau_n}\!\!\!\int\limits_{\R}e^sx_i^p(s^{-})\left[(1+\delta_i(s,z))^p-1\right]\tilde\nu_2(ds,dz).
\end{align}
Under Assumption \ref{ass1} there are a constants $K_i(p)>0, i=1,2$, such that
\ba\label{eq13}
e^sx_i^p(s)\left\{\rule{0pt}{18pt}1\!+\!p\left[(-1)^{i-1}\rule{0pt}{16pt}
\left(a_i(s)\!-\!\frac{c_i(s)x_{3-i}(s)}{1+m(s)x_1(s)}\right)-b_i(s)x_i(s)\right]\right.\nonumber\\ \left. +\frac{p(p-1)\sigma_i^2(s)}{2}
+\int\limits_{\R}\left[(1+\gamma_i(s,z))^p-1-p\gamma_i(s,z)\right]\Pi_1(dz)\right.\nonumber\\ \left.
+\!\!\int\limits_{\R}\!\left[(1\!+\!\delta_i(s,z))^p\!-\!1\right]\Pi_2(dz)\right\}\le e^s K_i(p)
\end{align}
From $(\ref{eq12})$ and $(\ref{eq13})$, taking the expectation, we obtain
\ba
\E\left[V(t\wedge\tau_n,x_i(t\wedge\tau_n))\right]\le x_{i0}^p+K_i(p)e^t, i=1,2.\nonumber
\end{align}
If $n\to\infty$, then we get
\ba
e^t\E\left[x_i^p(t)\right]\le x_{i0}^p+K_i(p)e^t,\ i=1,2.\nonumber
\end{align}
Hence
$
\limsup_{t\to\infty}\E\left[x_i^p(t)\right]\le K_i(p),\ i=1,2$.
\end{proof}

\begin{lemma}\label{lm3}
If $p_{2\inf}>0$, where $p_2(t)=-a_2(t)-\beta_2(t)$, then for any initial value $x_{20}>0$, the predator population density $x_2(t)$  has the property that
\begin{align}\label{eq14}
\limsup_{t\to\infty}\E\left[\left(\frac{1}{x_2(t)}\right)^{\theta}\right] \le K(\theta),\ 0<\theta<1,
\end{align}
\end{lemma}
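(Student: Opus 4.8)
The plan is to adapt the positive-moment estimate of Lemma \ref{lm2} to the negative moment $x_2^{-\theta}$. I would apply the It\^o formula to $V(t,x_2)=e^{ct}x_2^{-\theta}(t)$, $0<\theta<1$, for a constant $c>0$ to be chosen later, and run the computation on the stopping times $\tau_n$ of Theorem \ref{thm1} so that all $dw_2$, $\tilde\nu_1$ and $\tilde\nu_2$ integrals are martingales that vanish after taking expectations, exactly as in the passage from $(\ref{eq12})$ to $(\ref{eq13})$. First I compute the drift of $x_2^{-\theta}$; note that decompensating $\tilde\nu_1$ contributes the extra continuous term $\theta\int_{\R}\gamma_2(t,z)\Pi_1(dz)\,x_2^{-\theta}$.

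Writing out the $dt$-coefficient, I expect it to take the form
\[
x_2^{-\theta}\phi(\theta,t)-\theta\frac{c_2(t)x_1}{1+m(t)x_1}\,x_2^{-\theta}+\theta b_2(t)\,x_2^{1-\theta},
\]
where
\[
\phi(\theta,t)=\theta a_2(t)+\frac{1}{2}\theta(\theta+1)\sigma_2^2(t)+\int_{\R}\big[\theta\gamma_2+(1+\gamma_2)^{-\theta}-1\big]\Pi_1(dz)+\int_{\R}\big[(1+\delta_2)^{-\theta}-1\big]\Pi_2(dz).
\]
The middle term is $\le 0$ and may simply be discarded. The last term $\theta b_2 x_2^{1-\theta}$ is the only $x_2$-dependent survivor, and it is controlled by Lemma \ref{lm2} applied with $p=1-\theta\in(0,1)$, which gives $\limsup_{t\to\infty}\E[x_2^{1-\theta}(t)]\le K_2(1-\theta)$.

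The decisive step is to show that $\phi(\theta,t)\le -c<0$ uniformly in $t\ge0$, and this is exactly where the hypothesis $p_{2\inf}>0$ enters. One checks $\phi(0,t)=0$, and a direct differentiation gives $\partial_\theta\phi(0,t)=a_2(t)+\beta_2(t)=-p_2(t)\le -p_{2\inf}<0$, while $\phi(\cdot,t)$ is convex in $\theta$. Hence $\phi(\theta,t)$ is strictly negative for $\theta$ near $0$, uniformly in $t$ by virtue of Assumption \ref{ass1} (all coefficients, and the finite measures $\Pi_i$, are bounded). I would then set $c:=-\sup_{t\ge0}\phi(\theta,t)>0$ and take this same $c$ as the exponential rate, so that the contribution $c\,e^{ct}x_2^{-\theta}$ from $\partial_t V$ cancels $e^{ct}x_2^{-\theta}\phi(\theta,t)$, leaving the drift of $e^{ct}x_2^{-\theta}$ bounded above by $e^{ct}\theta b_{2\sup}x_2^{1-\theta}$.

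Integrating, taking expectations (the martingale terms drop on $[0,t\wedge\tau_n]$) and letting $n\to\infty$ by Fatou's lemma yields $e^{ct}\E[x_2^{-\theta}(t)]\le x_{20}^{-\theta}+\theta b_{2\sup}\int_0^t e^{cs}\E[x_2^{1-\theta}(s)]\,ds$. Inserting the eventual bound $\E[x_2^{1-\theta}(s)]\le K_2(1-\theta)+1$ from Lemma \ref{lm2} and evaluating $\int_0^t e^{cs}\,ds$ gives $\limsup_{t\to\infty}\E[x_2^{-\theta}(t)]\le \theta b_{2\sup}(K_2(1-\theta)+1)/c=:K(\theta)$, which is $(\ref{eq14})$. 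The main obstacle is precisely the uniform sign control of $\phi$: proving $\phi(\theta,t)\le -c<0$ over the required range of $\theta$ is the estimate that genuinely uses $p_{2\inf}>0$ (through $\partial_\theta\phi(0,t)=-p_2(t)$), and it is the one to handle most carefully, since for larger $\theta$ the convex, jump-generated terms $(1+\gamma_2)^{-\theta}$, $(1+\delta_2)^{-\theta}$ push $\phi$ upward.
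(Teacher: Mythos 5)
Your proposal is correct, but it takes a genuinely different route from the paper. The paper works with $U(t)=1/x_2(t)$ and applies It\^{o}'s formula to the shifted power $(1+U(t))^{\theta}$; the shift by $1$ is what tames the density-dependence term, since $b_2(s)x_2(s)=b_2(s)U^{-1}(s)$ enters the drift as $b_2(s)(1+U(s))^{\theta-1}$, which is bounded for $\theta<1$, so the entire drift of $e^{\la t}(1+U(t))^{\theta}$ is bounded by a constant and the lemma follows in one self-contained sweep. You instead work directly with $e^{ct}x_2^{-\theta}(t)$; your drift computation is correct (including the decompensation term $\theta\int_{\R}\gamma_2\,\Pi_1(dz)$ and the sign of the functional-response term), and your identification $\partial_\theta\phi(0,t)=a_2(t)+\beta_2(t)=-p_2(t)$ together with convexity of $\phi(\cdot,t)$ and the uniform bound on $\partial^2_\theta\phi$ supplied by Assumption~\ref{ass1} gives $\phi(\theta,t)\le-\theta p_{2\inf}+C\theta^2/2<0$ uniformly in $t$ for small $\theta$ — this is exactly the same mechanism by which the paper chooses $\theta$ so that $K_0(\theta)=\inf_t[p_2(t)-\Delta(\theta,t)]>0$. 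The price of dropping the shift is the surviving term $\theta b_2(t)x_2^{1-\theta}$, which you correctly control by invoking Lemma~\ref{lm2} with $p=1-\theta$ and a Gronwall-type integration; the paper's choice of Lyapunov function avoids any appeal to Lemma~\ref{lm2}. Both arguments establish \eqref{eq14} only for sufficiently small $\theta$ (the paper's proof has the same restriction despite the statement reading $0<\theta<1$), which is all that is needed for stochastic permanence, and you flag this limitation explicitly. One small bookkeeping point: after letting $n\to\infty$ you need $\E[x_2^{1-\theta}(s)]$ bounded on all of $[0,t]$, not just its $\limsup$; this is available from the proof of Lemma~\ref{lm2}, which yields $\E[x_i^p(s)]\le x_{i0}^p e^{-s}+K_i(p)$ for every $s$, so the gap is cosmetic.
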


\begin{proof}
For the process $U(t)=1/x_2(t)$ by the It\^{o}'s formula we have
\begin{align}
U(t)=U(0)+\int\limits_0^t U(s) \left[\rule{0pt}{20pt}a_2(s)-\frac{c_2(s)x_{1}(s)}{1+m(s)x_{1}(s)}+b_2(s)x_2(s)+\sigma_2^2(s)\right.\nonumber\\ \left.+\int\limits_{\R}\frac{\gamma_2^2(s,z)}{1+\gamma_2(s,z)}\Pi_1(dz)\right]ds
-\int\limits_0^tU(s)\sigma_2(s)dw_2(s)\nonumber\\-\int\limits_0^t\!\!\!\int\limits_{\R}U(s^{-})\frac{\gamma_2(s,z)}{1+\gamma_2(s,z)}\tilde\nu_1(ds,dz)-
\int\limits_0^t\!\!\!\int\limits_{\R}U(s^{-})\frac{\delta_2(s,z)}{1+\delta_2(s,z)}\nu_2(ds,dz).
\nonumber
\end{align}

Then by the It\^{o}'s formula we derive for $0<\theta<1$
\begin{align}
(1+U(t))^{\theta}=(1+U(0))^{\theta}+\int\limits_0^t \theta(1+U(s))^{\theta-2}\left\{\rule{0pt}{20pt}(1+U(s))U(s)\right.\nonumber\\ \times\left[a_2(s)-\frac{c_2(s)x_{1}(s)}{1+m(s)x_{1}(s)}+b_2(s)x_2(s)\sigma_2^2(s)\!
+\!\int\limits_{\R}\frac{\gamma_2^2(s,z)}{1+\gamma_2(s,z)}\Pi_1(dz)\right]
\nonumber
\end{align}

\begin{align}
+\frac{\theta-1}{2}U^2(s)\sigma_2^2(s)\nonumber\\+\frac{1}{\theta}\int\limits_{\R}\left[(1+U(s))^2\left(\left(\frac{1+U(s)+
\gamma_2(s,z)}{(1+\gamma_2(s,z))(1+U(s))}\right)^{\theta}-1\right)\right.\nonumber\\+\left.
\theta(1+U(s))\frac{U(s)\gamma_2(s,z)}{1+\gamma_2(s,z)}\right]\Pi_1(dz)\nonumber\end{align}

\begin{align} \left.
+\frac{1}{\theta}\int\limits_{\R}(1+U(s))^2\left[\left(\frac{1+U(s)+\delta_2(s,z)}{(1+\delta_2(s,z))(1+U(s))}\right)^{\theta}
-1\right]\Pi_2(dz)\right\}ds\nonumber\\
-\int\limits_0^t\theta(1+U(s))^{\theta-1}U(s)\sigma_2(s)dw_2(s)\nonumber\\+
\int\limits_0^t\!\!\int\limits_{\R}\left[\left(1+\frac{U(s^{-})}{1+\gamma_2(s,z)}\right)^{\theta}\!-\!(1\!+\!U(s^{-}))^\theta\right]\tilde\nu_1(ds,dz)\nonumber
\end{align}

\begin{align}
+\int\limits_0^t\!\!\int\limits_{\R}\left[\left(1+\frac{U(s^{-})}{1\!+\!\delta_2(s,z)}\right)^{\theta}\!-\!(1\!+\!U(s^{-}))^\theta\right]\tilde\nu_2(ds,dz)=
(1+U(0))^{\theta}\nonumber
\end{align}
\begin{align}\label{eq15} +\int\limits_0^t \theta(1+U(s))^{\theta-2}J(s)ds-I_{1,stoch}(t)+I_{2,stoch}(t)+I_{3,stoch}(t),
\end{align}
where $I_{j,stoch}(t), j=\overline{1,3}$ are corresponding stochastic integrals in $(\ref{eq15})$. Under Assumption \ref{ass1} there exist constants $|K_1(\theta)|<\infty$, $|K_2(\theta)|<\infty$ such, that for the process $J(t)$ we have the estimate
\begin{align}
J(t)\le (1+U(t))U(t)\left[a_{2}(t)+b_2(t)U^{-1}(t)+\sigma_2^2(t)\!\!\phantom{\int\limits_{\R}}\right.\nonumber\\ \left.+\int\limits_{\R}\gamma_2(s,z)\Pi_1(dz)\right]
+\frac{\theta-1}{2}U^2(s)\sigma_2^2(s)\nonumber\\ +\frac{1}{\theta}\int\limits_{\R}(1+U(s))^2\left[\left(\frac{1}{1+\gamma_2(s,z)}+
\frac{1}{1+U(s)}\right)^{\theta}-1\right]\Pi_1(dz)
\nonumber\\+\frac{1}{\theta}\int\limits_{\R}(1+U(s))^2\left[\left(\frac{1}{1+\delta_2(s,z)}+
\frac{1}{1+U(s)}\right)^{\theta}-1\right]\Pi_2(dz)\nonumber
\end{align}
\begin{align}
 \le U^2(t)\left[a_{2}(t)+\frac{\sigma_2^2(t)}{2} +\int\limits_{\R}\gamma_2(t,z)\Pi_1(dz)+\frac{\theta}{2}\sigma_2^2(t)\right.\nonumber\\ \left.+\frac{1}{\theta}\int\limits_{\R}[(1+\gamma_2(t,z))^{-\theta}-1]\Pi_1(dz)
+\frac{1}{\theta}\int\limits_{\R}[(1+\delta_2(t,z))^{-\theta}-1]\Pi_2(dz)\right]\nonumber\\ +K_1(\theta)U(t)+K_2(\theta)=-K_0(t,\theta)U^2(t)+K_1(\theta)U(t)+K_2(\theta).
\nonumber
\end{align}
Here we used the inequality $(x+y)^{\theta}\le x^{\theta}+\theta x^{\theta-1}y$, $0<\theta<1$, $x,y>0$, in the first integral term for
$x=(1+\gamma_2(s,z))^{-1}$, $y=(1+U(s))^{-1}$, and then in the second integral term for
$x=(1+\delta_2(s,z))^{-1}$, $y=(1+U(s))^{-1}$. Due to
\begin{align}
\lim_{\theta\to0+}\left[\frac{\theta}{2}\sigma_i^2(t)+\frac{1}{\theta}\int\limits_{\R}[(1+\gamma_i(t,z))^{-\theta}-1]\Pi_1(dz)\right.\nonumber\\ \left.
+\frac{1}{\theta}\int\limits_{\R}[(1+\delta_i(t,z))^{-\theta}-1]\Pi_2(dz)
+\int\limits_{\R}\ln(1+\gamma_i(t,z))\Pi_1(dz)\right.\nonumber\\\left.+\int\limits_{\R}\ln(1+\delta_i(t,z))\Pi_2(dz)\right]=\lim_{\theta\to0+}\Delta(\theta,t)=0,\nonumber
\end{align}
and the condition $p_{2\inf}>0$ we can choose a sufficiently small $0<\theta<1$ so that
\begin{align}
K_0(\theta)=\inf_{t\ge0}K_0(t,\theta)=\inf_{t\ge0}[p_2(t)-\Delta(\theta,t)]>0\nonumber
\end{align}
is satisfied.
So from $(\ref{eq15})$ and the estimate for $J(t)$ we derive
\begin{align}\label{eq16}
d\left[(1+U(t))^\theta\right]\le\theta(1+U(t))^{\theta-2}[-K_0(\theta)U^2(t)+K_1(\theta)U(t)+K_2(\theta)]dt\nonumber\\
-\theta(1+U(t))^{\theta-1}U(t)\sigma_2(t)dw_2(t)+
\int\limits_{\R}\left[\left(1+\frac{U(t^{-})}{1+\gamma_2(t,z)}\right)^{\theta}\right.
\nonumber\\ \left.
\!-(1\!+\!\!U(t^{-}))^\theta\rule{0pt}{18pt}\right]\!\tilde\nu_1(dt,dz)\!\! +\!\!\!
\int\limits_{\R}\!\left[\left(1\!+\!\frac{U(t^{-})}{1\!+\!\delta_2(t,z)}\right)^{\theta}\!\!\!\!
-\!(1\!+U(t^{-}))^\theta\right]\!\tilde\nu_2(dt,dz).
\end{align}
By the It\^{o}'s formula and $(\ref{eq16})$ we have
\begin{align}\label{eq17}
d\left[e^{\la t}(1+U(t))^\theta\right]=\la e^{\la t}(1+U(t))^\theta dt+e^{\la t} d\left[(1+U(t))^\theta\right]\nonumber \\
\le e^{\la t}\theta(1+U(t))^{\theta-2}\left[-\left(K_0(\theta)-\frac{\la}{\theta}\right)U^2(t)+\left(K_1(\theta)+\frac{2\la}{\theta}\right)U(t)\right.\nonumber
\\ \left.
+K_2(\theta)+\frac{\la}{\theta}\right]dt-\theta e^{\la t}(1+U(t))^{\theta-1}U(t)\sigma_2(t)dw_2(t)\nonumber\end{align}
\begin{align}
+e^{\la t}\int\limits_{\R}\left[\left(1+\frac{U(t^{-})}{1+\gamma_2(t,z)}\right)^{\theta}-(1+U(t^{-}))^\theta\right]\tilde\nu_1(dt,dz)\nonumber\\ \displaystyle+
e^{\la t}\int\limits_{\R}\left[\left(1+\frac{U(t^{-})}{1+\delta_2(t,z)}\right)^{\theta}-(1+U(t^{-}))^\theta\right]\tilde\nu_2(dt,dz).
\end{align}

Let us choose $\la=\la(\theta)>0$ such, that $K_0(\theta)-\la/\theta>0$. Then there is a constant $K>0$, such that
\begin{align}\label{eq18}
(1+U(t))^{\theta-2}\left[-\left(K_0(\theta)-\frac{\la}{\theta}\right)U^2(t)\right.\nonumber\\\left.
+\left(K_1(\theta)+\frac{2\la}{\theta}\right)U(t)
+K_2(\theta)+\frac{\la}{\theta}\right]\le K.
\end{align}
Let $\tau_n$ be the stopping time defined in the Theorem \ref{thm1}. Then by integrating $(\ref{eq17})$, using $(\ref{eq18})$  and taking the expectation we obtain
\begin{align}
\E\left[e^{\la (t\wedge\tau_n)}(1+U(t\wedge\tau_n))^\theta\right]\le \left(1+\frac{1}{x_{20}}\right)^\theta+\frac{\theta}{\la}K\left(e^{\la t}-1\right).
\nonumber
\end{align}
Letting $n\to\infty$ leads to the estimate
\begin{align}\label{eq19}
e^t\E\left[(1+U(t))^\theta\right]\le \left(1+\frac{1}{x_{20}}\right)^\theta+\frac{\theta}{\la}K\left(e^{\la t}-1\right).
\end{align}
From $(\ref{eq19})$ we obtain
\begin{align}
\limsup_{t\to\infty}\E\left[\left(\frac{1}{x_2(t)}\right)^{\theta}\right]=\limsup_{t\to\infty}\E\left[U^{\theta}(t)\right]\nonumber\\ \le \limsup_{t\to\infty}\E\left[(1+U(t))^{\theta}\right]\le \frac{\theta}{\la(\theta)}K,\nonumber
\end{align}
this implies  $(\ref{eq14})$.
\end{proof}

\section{The long time behaviour}

\begin{defin}\label{def1}(\cite{LiMao})
The solution $X(t)$ to the system $(\ref{eq3})$ will be said stochastically ultimately
bounded, if for any $\varepsilon\in(0,1)$, there is a positive constant $\chi=\chi(\varepsilon)>0$, such that for
any initial value $X_0\in\R^2_{+}$, the solution to the system $(\ref{eq3})$ has the property that
\begin{align}
\limsup_{t\to\infty}\pr\left\{|X(t)|>\chi\right\}<\varepsilon.\nonumber
\end{align}
\end{defin}

In what follows in this section we will assume that Assumption \ref{ass1} holds.

\begin{thm}\label{thm2}
The solution $X(t)$ to the system $(\ref{eq3})$ is stochastically ultimately
bounded for any initial value $X_0\in\R^2_{+}$.
\end{thm}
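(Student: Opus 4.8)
The plan is to reduce the statement to the moment estimate already furnished by Lemma \ref{lm2} combined with an application of Chebyshev's inequality. Since stochastic ultimate boundedness (Definition \ref{def1}) only requires control of the tail probability $\pr\{|X(t)|>\chi\}$, it suffices to produce a uniform-in-time asymptotic bound on a fixed positive moment of $|X(t)|$ and then convert that moment bound into a tail bound.

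First I would fix a convenient exponent, say $p=2$, and use $|X(t)|^2=x_1^2(t)+x_2^2(t)$ together with linearity of expectation to write $\E[|X(t)|^2]=\E[x_1^2(t)]+\E[x_2^2(t)]$. Applying Lemma \ref{lm2} with $p=2$ to each summand gives $\limsup_{t\to\infty}\E[|X(t)|^2]\le K_1(2)+K_2(2)=:H$, a finite constant. Crucially, although the individual paths depend on $X_0$, this bounding constant does not, since Lemma \ref{lm2} holds with the same $K_i(p)$ for every initial value $x_{i0}>0$.

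Next, for any $\chi>0$ the Chebyshev inequality yields $\pr\{|X(t)|>\chi\}=\pr\{|X(t)|^2>\chi^2\}\le \E[|X(t)|^2]/\chi^2$, whence $\limsup_{t\to\infty}\pr\{|X(t)|>\chi\}\le H/\chi^2$. Given $\eps\in(0,1)$, I would then set $\chi=\chi(\eps)=\sqrt{2H/\eps}$ (any value strictly exceeding $\sqrt{H/\eps}$ works equally well), so that $\limsup_{t\to\infty}\pr\{|X(t)|>\chi\}\le H/\chi^2=\eps/2<\eps$, which is precisely the required conclusion.

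The essential analytic work has already been absorbed into Lemma \ref{lm2}, so no genuine obstacle remains at this stage; the only points demanding mild care are verifying that the bounding constant $H$ is uniform in the initial data and selecting $\chi$ slightly above the critical threshold $\sqrt{H/\eps}$ to secure the \emph{strict} inequality demanded by Definition \ref{def1}. If one prefers to avoid squaring, the same argument carried out with $p=1$ and the elementary bound $|X(t)|\le x_1(t)+x_2(t)$ proceeds identically.
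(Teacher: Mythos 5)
Your proposal is correct and follows essentially the same route as the paper: invoke Lemma \ref{lm2} to get a uniform asymptotic moment bound, then convert it to a tail bound via Chebyshev's inequality. The paper uses $p=1$ with $|X(t)|\le x_1(t)+x_2(t)$ rather than your $p=2$, but as you yourself note this is an immaterial variation.
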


\begin{proof}

From Lemma \ref{lm2} we have estimate
\begin{align}\label{eq32}
\limsup_{t\to\infty}E[x_i(t)]\le K_i,\ i=1,2.
\end{align}
For $X=(x_1,x_2)\in\R^2_{+}$ we have $|X|\le x_1+x_2$, therefore, from $(\ref{eq32})$\linebreak
$\limsup_{t\to\infty}E[|X(t)|]\le L=K_1+K_2$. Let $\chi>L/\varepsilon$, $\forall\varepsilon\in(0,1)$. Then applying the Chebyshev inequality yields
\begin{align}
\limsup_{t\to\infty}\pr\{|X(t)|>\chi\}\le\frac{1}{\chi}\lim\sup_{t\to\infty}E[|X(t)|]\le \frac{L}{\chi}<\varepsilon.\nonumber
\end{align}
\end{proof}

The property of stochastic permanence is important since it means the long-time survival in a population dynamics.

\begin{defin}
The population density $x(t)$ will be said stochastically permanent if for any $\eps>0$, there are positive constants $H=H(\eps)$, $h=h(\eps)$ such that
\begin{align}
\liminf\limits_{t\to\infty}\pr\{x(t)\le H\}\ge1-\eps,\quad \liminf\limits_{t\to\infty}\pr\{x(t)\ge h\}\ge1-\eps,
\nonumber
\end{align}
for any inial value $x(0)=x_0>0$.
\end{defin}

\begin{thm}
 If $p_{2\inf}>0$, where $p_2(t)=-a_2(t)-\beta_2(t)$, then for any initial value $x_{20}>0$, the predator population density $x_2(t)$ is stochastically permanent.
\end{thm}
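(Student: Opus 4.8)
The plan is to read off the two one-sided bounds in the definition of stochastic permanence directly from the moment estimates already in hand, via Chebyshev's inequality; no new hard analysis is required, because Lemmas \ref{lm2} and \ref{lm3} supply exactly the two ingredients needed. Lemma \ref{lm2} controls $x_2(t)$ from above in mean, and Lemma \ref{lm3} controls $1/x_2(t)$ from above in a fractional moment, which is the same as controlling $x_2(t)$ from below.

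First I would treat the upper bound. By Lemma \ref{lm2} with $p=1$ there is a constant $K_2 := K_2(1)$ with $\limsup_{t\to\infty}\E[x_2(t)]\le K_2$. Given $\eps>0$, set $H=H(\eps)=K_2/\eps$. Chebyshev's inequality gives $\pr\{x_2(t)>H\}\le \E[x_2(t)]/H$, whence $\limsup_{t\to\infty}\pr\{x_2(t)>H\}\le K_2/H=\eps$, i.e. $\liminf_{t\to\infty}\pr\{x_2(t)\le H\}\ge 1-\eps$.

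For the lower bound I would invoke Lemma \ref{lm3}, which is precisely where the hypothesis $p_{2\inf}>0$ enters. Fix an admissible exponent $\theta\in(0,1)$ (the one for which $K(\theta)<\infty$ in Lemma \ref{lm3}), so that $\limsup_{t\to\infty}\E[(1/x_2(t))^{\theta}]\le K(\theta)$. Given $\eps>0$, choose $h=h(\eps)=(\eps/K(\theta))^{1/\theta}$. Since $\{x_2(t)<h\}=\{(1/x_2(t))^{\theta}>h^{-\theta}\}$, Chebyshev's inequality yields $\pr\{x_2(t)<h\}\le h^{\theta}\,\E[(1/x_2(t))^{\theta}]$, so that $\limsup_{t\to\infty}\pr\{x_2(t)<h\}\le h^{\theta}K(\theta)=\eps$, i.e. $\liminf_{t\to\infty}\pr\{x_2(t)\ge h\}\ge 1-\eps$.

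Both statements hold for every initial value $x_{20}>0$ because the constants $K_2$ and $K(\theta)$ furnished by the lemmas are independent of the initial data. The only genuine content is concentrated in Lemma \ref{lm3}, so I do not expect a real obstacle here; the one point to be careful about is that a single exponent $\theta$ must be fixed uniformly (the same $\theta$ selected inside the proof of Lemma \ref{lm3}) so that the Chebyshev estimate for the lower tail closes, which is immediate.
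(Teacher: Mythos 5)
Your proposal is correct and follows essentially the same route as the paper: Lemma \ref{lm2} with $p=1$ plus Chebyshev for the upper bound with $H=K/\eps$, and Lemma \ref{lm3} plus Chebyshev for the lower bound with $h=(\eps/K(\theta))^{1/\theta}$. No gaps.
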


\begin{proof}
From Lemma \ref{lm2} we have estimate
\begin{align}
\limsup_{t\to\infty}E[x_2(t)]\le K.\nonumber
\end{align}
Thus for any given $\eps > 0$, let $H = K/\eps$, by virtue of Chebyshev's inequality, we can derive that
\begin{align}
\limsup\limits_{t\to\infty}\pr\{x_2(t)\ge H\}\le \frac{1}{H}\limsup_{t\to\infty}E[x_2(t)]\le\eps.\nonumber
\end{align}
Consequently $\liminf\limits_{t\to\infty}\pr\{x_2(t)\le H\}\ge1-\eps$.

From Lemma \ref{lm3} we have the estimate
\begin{align}
\limsup_{t\to\infty}\E\left[\left(\frac{1}{x_2(t)}\right)^{\theta}\right] \le K(\theta),\ 0<\theta<1.\nonumber
\end{align}
For any given $\eps > 0$, let $h = (\eps/K(\theta))^{1/\theta}$, then by Chebyshev's inequality, we have
\begin{align}
\limsup\limits_{t\to\infty}\pr\{x_2(t)< h\}\le \limsup\limits_{t\to\infty}\pr\left\{\left(\frac{1}{x_2(t)}\right)^{\theta}>h^{-\theta}\right\}\nonumber\\\le
h^\theta\limsup\limits_{t\to\infty}\E\left[\left(\frac{1}{x_2(t)}\right)^{\theta}\right]\le\eps.\nonumber
\end{align}
Consequently $\liminf\limits_{t\to\infty}\pr\{x_2(t)\ge h\}\ge1-\eps$.
\end{proof}
If the predator is absent, i.e. $x_2(t)\equiv0$ a.s., then the equation for prey population is the non-autonomous stochastic logistic differential equation. The sufficient conditions for the stochastic permanence of the solution to the stochastic logistic differential equation are obtained in \cite{Bor4}:
\begin{thm}\label{thm3}
 If the predator is absent, i.e. $x_2(t)\equiv0$ a.s., and $p_{1\inf}>0$, where $p_1(t)=a_1(t)-\beta_1(t)$, then for any initial value $x_{10}>0$, the prey population density $x_1(t)$ is stochastically permanent.
\end{thm}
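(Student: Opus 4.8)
The plan is to reduce the problem to the stochastic logistic equation and then reproduce, for the prey, the two one-sided estimates that delivered stochastic permanence of the predator. When $x_2(t)\equiv0$ a.s. the $i=1$ component of $(\ref{eq3})$ collapses to
\begin{align}
dx_1(t)=x_1(t)[a_1(t)-b_1(t)x_1(t)]dt+\sigma_1(t)x_1(t)dw_1(t)\nonumber\\
+\int\limits_{\R}\gamma_1(t,z)x_1(t^-)\tilde\nu_1(dt,dz)+\int\limits_{\R}\delta_1(t,z)x_1(t^-)\nu_2(dt,dz),\nonumber
\end{align}
which is exactly the non-autonomous stochastic logistic equation with white noise and centered/non-centered Poisson noises studied in \cite{Bor4}. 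By the definition of stochastic permanence, two controls are needed: $\liminf_{t\to\infty}\pr\{x_1(t)\le H\}\ge1-\eps$ and $\liminf_{t\to\infty}\pr\{x_1(t)\ge h\}\ge1-\eps$. The upper control is immediate: Lemma \ref{lm2} with $p=1$, $i=1$ gives $\limsup_{t\to\infty}\E[x_1(t)]\le K_1$, so for $H=K_1/\eps$ Chebyshev's inequality yields $\limsup_{t\to\infty}\pr\{x_1(t)\ge H\}\le\eps$, exactly as in the predator permanence argument above.

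The substantive part is the lower control, for which I would prove the prey analogue of Lemma \ref{lm3}: if $p_{1\inf}>0$ with $p_1(t)=a_1(t)-\beta_1(t)$, then $\limsup_{t\to\infty}\E[(1/x_1(t))^\theta]\le K(\theta)$ for sufficiently small $0<\theta<1$. Setting $U(t)=1/x_1(t)$ and applying the It\^o formula to the reduced equation, the continuous part of the drift of $U$ is $U(t)[-a_1(t)+\sigma_1^2(t)+\int_{\R}\gamma_1^2(t,z)(1+\gamma_1(t,z))^{-1}\Pi_1(dz)]+b_1(t)$, the Holling coupling that complicated Lemma \ref{lm3} being now absent. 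Applying the It\^o formula successively to $(1+U(t))^\theta$ and to $e^{\la t}(1+U(t))^\theta$ reproduces the structure of $(\ref{eq15})$--$(\ref{eq17})$, the dominant term being $-K_0(t,\theta)U^2(t)$ with
\begin{align}
K_0(t,\theta)=p_1(t)-\Delta(\theta,t),\nonumber
\end{align}
where $\Delta(\theta,t)$ is the same correction functional as in Lemma \ref{lm3}, satisfying $\lim_{\theta\to0+}\Delta(\theta,t)=0$. The single sign change relative to the predator computation ($-a_1$ in place of $+a_2$, caused by the factor $(-1)^{i-1}$) is precisely what converts the survival threshold from $p_2=-a_2-\beta_2$ into $p_1=a_1-\beta_1$.

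From here the argument is mechanical. Under $p_{1\inf}>0$ I would fix $\theta\in(0,1)$ small enough that $K_0(\theta)=\inf_{t\ge0}[p_1(t)-\Delta(\theta,t)]>0$, then choose $\la=\la(\theta)>0$ with $K_0(\theta)-\la/\theta>0$, so that the bracket multiplying $e^{\la t}\theta(1+U(t))^{\theta-2}$ is bounded above by a constant $K$, as in $(\ref{eq18})$. Integrating up to $t\wedge\tau_n$ (with $\tau_n$ the stopping time of Theorem \ref{thm1}), taking expectations so that the three martingale integrals drop out, and letting $n\to\infty$ gives $e^t\E[(1+U(t))^\theta]\le(1+1/x_{10})^\theta+(\theta/\la)K(e^{\la t}-1)$, hence $\limsup_{t\to\infty}\E[(1/x_1(t))^\theta]\le(\theta/\la)K=:K(\theta)$. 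Finally, for given $\eps>0$ put $h=(\eps/K(\theta))^{1/\theta}$; Chebyshev's inequality applied to $(1/x_1(t))^\theta$ produces $\limsup_{t\to\infty}\pr\{x_1(t)<h\}\le\eps$, i.e. $\liminf_{t\to\infty}\pr\{x_1(t)\ge h\}\ge1-\eps$, which together with the upper control establishes stochastic permanence.

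The main obstacle is the one already hidden in Lemma \ref{lm3}: showing that the leading quadratic coefficient $K_0(\theta)$ stays positive for small $\theta$. This forces one to collect all the centered- and non-centered-jump correction integrals generated by the two It\^o expansions and to tame the $\theta^{-1}$ factors by the elementary inequalities $(x+y)^\theta\le x^\theta+\theta x^{\theta-1}y$ and $x^r\le1+r(x-1)$, so that they cancel against $\int_{\R}\ln(1+\gamma_1(t,z))\Pi_1(dz)$ and $\int_{\R}\ln(1+\delta_1(t,z))\Pi_2(dz)$ in the limit $\theta\to0+$; once this bookkeeping matches the predator case, the remainder is routine.
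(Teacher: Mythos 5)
Your argument is correct. Note, however, that the paper itself offers no proof of Theorem \ref{thm3}: it simply observes that with $x_2(t)\equiv0$ the prey equation reduces to the nonautonomous stochastic logistic equation and quotes the permanence result from \cite{Bor4}. What you have done is supply the missing proof by transplanting the paper's own predator machinery: the upper control is Lemma \ref{lm2} with $p=1$ plus Chebyshev (exactly as in the predator permanence theorem), and the lower control is a prey analogue of Lemma \ref{lm3} for $U(t)=1/x_1(t)$. Your bookkeeping is right: the It\^{o} drift of $U$ is $U(t)\bigl[-a_1(t)+\sigma_1^2(t)+\int_{\R}\gamma_1^2(t,z)(1+\gamma_1(t,z))^{-1}\Pi_1(dz)\bigr]+b_1(t)$ plus the $\nu_2$ term, the Holling coupling is absent, and the sign flip $-a_1$ versus $+a_2$ (from the factor $(-1)^{i-1}$) is precisely what turns the leading quadratic coefficient into $K_0(t,\theta)=p_1(t)-\Delta(\theta,t)$ with $p_1=a_1-\beta_1$, so that $p_{1\inf}>0$ and $\lim_{\theta\to0+}\Delta(\theta,t)=0$ let you fix a small $\theta$ with $K_0(\theta)>0$; the rest ($\la$ with $K_0(\theta)-\la/\theta>0$, the bound \eqref{eq18}, localization by $\tau_n$, and Chebyshev with $h=(\eps/K(\theta))^{1/\theta}$) goes through verbatim. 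The payoff of your route is a self-contained paper, at the cost of essentially repeating Lemma \ref{lm3}; the paper's citation buys brevity but leaves the reader to check that the hypotheses of \cite{Bor4} match the present jump structure (both centered and non-centered measures), which your computation confirms directly.
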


\begin{defin}
The solution $X(t)=(x_1(t),x_2(t))$, $t\ge0$ to the system $(\ref{eq3})$ will be said extinct if for every initial data $X_0\in \R^2_{+}$, we have $\lim_{t\to\infty}x_i(t)=0$ a.s., $i=1,2$.
\end{defin}

\begin{thm}\label{thm5}
 If
$
{\bar q}^{*}_i=\limsup_{t\to\infty}\frac{1}{t}\int\limits_0^tq_{i}(s)ds<0$, {where}\begin{align}
q_{1}(t)=a_{1}(t)-\beta_{1}(t),\ q_2(t)=-a_{2}(t)+\frac{c_2(t)}{m(t)}-\beta_{2}(t) \nonumber
\end{align}
then the solution $X(t)$ to the system $(\ref{eq3})$ with the initial condition $X_0\in\R^2_{+}$ will be extinct.
\end{thm}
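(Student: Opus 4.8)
The plan is to pass to the logarithm of each population density, bound its drift from above by $q_i(t)$, and then apply a strong law of large numbers to the martingale part, obtaining $\limsup_{t\to\infty}(\ln x_i(t))/t\le\bar q^{*}_i<0$, which forces $x_i(t)\to0$ almost surely. Note that the two conditions decouple: neither one-sided drift bound below will require any information about the other species.

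First I would apply the It\^o formula to $\ln x_i(t)$, exactly as in the derivation of $(\ref{eq9})$ but without the integrating factor $e^t$. Using the definition of $\beta_i(t)$ to absorb the diffusion and jump correction terms, this gives, for $i=1,2$,
\[
\ln x_i(t)-\ln x_{i0}=\int_0^t\left[(-1)^{i-1}\left(a_i(s)-\frac{c_i(s)x_{3-i}(s)}{1+m(s)x_1(s)}\right)-b_i(s)x_i(s)-\beta_i(s)\right]ds+M_i(t),
\]
where $M_i(t)$ gathers the Wiener integral together with the two compensated Poisson integrals.

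The second step is to estimate the drift from above. For the prey ($i=1$) the predation loss $c_1(s)x_2(s)/(1+m(s)x_1(s))$ and the competition term $b_1(s)x_1(s)$ are both nonnegative and may simply be discarded, leaving $\int_0^t(a_1(s)-\beta_1(s))\,ds=\int_0^t q_1(s)\,ds$. For the predator ($i=2$) the self-limitation term $-b_2(s)x_2(s)$ is nonpositive, while the functional-response gain obeys $c_2(s)x_1(s)/(1+m(s)x_1(s))<c_2(s)/m(s)$ because $x_1/(1+mx_1)<1/m$ for every $x_1>0$; discarding the former and bounding the latter yields $\int_0^t q_2(s)\,ds$. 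Dividing by $t$, I obtain for both indices
\[
\frac{\ln x_i(t)}{t}\le\frac{\ln x_{i0}}{t}+\frac{1}{t}\int_0^t q_i(s)\,ds+\frac{M_i(t)}{t}.
\]

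The decisive step is to show $M_i(t)/t\to0$ almost surely. Because $\sigma_i$, $\ln(1+\gamma_i)$ and $\ln(1+\delta_i)$ are bounded under Assumption \ref{ass1} and $\Pi_j(\R)<\infty$, the predictable quadratic variation of $M_i$ grows at most linearly in $t$, so the strong law of large numbers for local martingales gives $\lim_{t\to\infty}M_i(t)/t=0$ a.s.; alternatively one reuses verbatim the exponential-inequality and Borel--Cantelli argument already carried out for these very integrals in the proof of Lemma \ref{lm1}. Taking $\limsup$ in the displayed inequality then gives $\limsup_{t\to\infty}(\ln x_i(t))/t\le\bar q^{*}_i<0$, whence $\ln x_i(t)\to-\infty$ and $x_i(t)\to0$ a.s. for $i=1,2$. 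I expect the only real obstacle to be the control of $M_i(t)/t$: since $M_i$ mixes a Wiener integral with two compensated Poisson integrals, the limit theorem must be applied to a jump martingale, and the cleanest route is to invoke the estimate already established in Lemma \ref{lm1} rather than to prove a fresh result. The It\^o computation and the two one-sided drift bounds are otherwise routine.
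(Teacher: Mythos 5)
Your proposal is correct and follows essentially the same route as the paper: the same It\^{o} computation for $\ln x_i(t)$ with the jump corrections absorbed into $\beta_i(t)$, the same one-sided drift bounds yielding $\ln x_i(t)\le\ln x_{i0}+\int_0^t q_i(s)\,ds+M_i(t)$, and the same appeal to the strong law of large numbers for local martingales (via the linearly growing angle bracket of $M_i$) to kill $M_i(t)/t$. The only cosmetic difference is that you also mention falling back on the Lemma~\ref{lm1} exponential-inequality argument, whereas the paper simply cites Lipster's theorem.
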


\begin{proof}
By the It\^{o}'s formula, we have
\begin{align}\label{eq21}
\ln x_i(t)=\ln x_{i0}+\int_0^t\left\{(-1)^{i-1}\left[a_{i}(s)-\frac{c_i(s)x_{3-i}(s)}{1+m(s)x_1(s)}\right]\right.\nonumber\\ \left.-\beta_i(s)-b_i(s)x_i(s)\rule{0pt}{14pt}\right\}ds\!+\!M_i(t)\le \ln x_{i0}\!+\!\int_0^t q_i(s)ds\!+\!M_i(t), i=1,2
\end{align}
where the martingale
\begin{align}\label{eq22}
M_i(t)=\int\limits_0^t\sigma_i(s)dw_i(s)+\int\limits_0^t\!\!\int\limits_{\R}\ln(1+\gamma_i(s,z))\tilde\nu_1(ds,dz) \nonumber\\+
\int\limits_0^t\!\!\int\limits_{\R}\ln(1+\delta_i(s,z))\tilde\nu_2(ds,dz),\ i=1,2,
\end{align}
has quadratic variation (Meyer's angle bracket process)
\begin{align}
\langle M_i,M_i\rangle(t)=\int\limits_0^t\sigma^2_i(s)ds+\int\limits_0^t\!\!\int\limits_{\R}\ln^2(1+\gamma_i(s,z))\Pi_1(dz)ds\nonumber\\+
\int\limits_0^t\!\!\int\limits_{\R}\ln^2(1+\delta_i(s,z))\Pi_2(dz)ds\le Kt, \ i=1,2.\nonumber
\end{align}

Then the strong law of large numbers for local martingales (\cite{Lip}) yields $\lim_{t\to\infty}M_i(t)/t=0, i=1,2$ a.s. Therefore, from $(\ref{eq21})$ we obtain
$$
\limsup_{t\to\infty}\frac{\ln x_i(t)}{t}\le\limsup_{t\to\infty}\frac{1}{t}\int\limits_0^t q_{i}(s)ds<0, \quad\hbox{a.s.}
$$
So $\lim_{t\to\infty}x_i(t)=0, i=1,2$ a.s.
\end{proof}

\begin{defin}(\cite{Liu1})
The solution $X(t)=(x_1(t),x_2(t))$, $t\ge0$ to the system $(\ref{eq3})$ will be said non-persistent in the mean if for every initial data $X_0\in\R^2_{+}$, we have
$\lim_{t\to\infty}\frac{1}{t}\int\limits_{0}^{t}x_i(s)ds=0$ a.s., $i=1,2$.
\end{defin}

\begin{thm}
If ${\bar q}_i^{*}=0$, $i=1,2$, then the solution $X(t)=(x_1(t),x_2(t))$ to the system $(\ref{eq3})$ with the initial value $X_0\in\R^2_{+}$ will be non-persistent in the mean.
\end{thm}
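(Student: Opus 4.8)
The plan is to recycle the logarithmic estimate already produced in the proof of Theorem~\ref{thm5} and feed it into the standard ``exponentiate-and-integrate'' argument for time averages. First I would start from the It\^o representation $(\ref{eq21})$, which gives, for $i=1,2$,
\[\ln x_i(t)\le \ln x_{i0}+\int_0^t q_i(s)\,ds-b_{i\inf}\int_0^t x_i(s)\,ds+M_i(t),\]
where $M_i(t)$ is the martingale $(\ref{eq22})$. The majorization by $q_i$ uses exactly the two elementary inequalities already invoked in Theorem~\ref{thm5}: for $i=1$ one drops the nonnegative terms $c_1(s)x_2(s)/(1+m(s)x_1(s))$ and $b_1(s)x_1(s)$ down to the density-dependence floor $b_{1\inf}$; for $i=2$ one bounds $c_2(s)x_1(s)/(1+m(s)x_1(s))\le c_2(s)/m(s)$ and again retains only $-b_{2\inf}x_2(s)$ from the competition term. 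Since $\langle M_i,M_i\rangle(t)\le Kt$, the strong law of large numbers for local martingales gives $M_i(t)/t\to0$ a.s., so the auxiliary process $h_i(t):=\ln x_{i0}+\int_0^t q_i(s)\,ds+M_i(t)$ satisfies $\limsup_{t\to\infty}h_i(t)/t\le {\bar q}_i^{*}=0$ a.s.

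Next I would introduce $y_i(t)=\int_0^t x_i(s)\,ds$, so that $y_i'(t)=x_i(t)>0$ and the estimate reads $\ln y_i'(t)\le h_i(t)-b_{i\inf}y_i(t)$. Exponentiating and multiplying by the integrating factor $e^{b_{i\inf}y_i(t)}$ gives $\frac{d}{dt}e^{b_{i\inf}y_i(t)}=b_{i\inf}y_i'(t)e^{b_{i\inf}y_i(t)}\le b_{i\inf}e^{h_i(t)}$; integrating from $0$ to $t$ yields $e^{b_{i\inf}y_i(t)}\le 1+b_{i\inf}\int_0^t e^{h_i(s)}\,ds$, hence $b_{i\inf}y_i(t)\le \ln\bigl(1+b_{i\inf}\int_0^t e^{h_i(s)}\,ds\bigr)$. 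The final step controls the integral: fixing $\eps>0$ and using $\limsup_{t}h_i(t)/t\le0$, there is a (random) time $T_\eps$ with $h_i(s)\le \eps s$ for $s\ge T_\eps$, so $\int_0^t e^{h_i(s)}\,ds\le C_\eps+\eps^{-1}e^{\eps t}$. Substituting, dividing by $t$, and letting $t\to\infty$ gives $\limsup_{t\to\infty}y_i(t)/t\le \eps/b_{i\inf}$; sending $\eps\downarrow0$ and using $y_i(t)\ge0$ forces $\lim_{t\to\infty}y_i(t)/t=0$ a.s., which is precisely non-persistence in the mean.

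The main obstacle, and the conceptual core of the argument, is the passage from a bound on $\ln x_i(t)$ to a bound on the time average $t^{-1}\int_0^t x_i(s)\,ds$. Unlike the extinction regime $({\bar q}_i^{*}<0)$, where the sign of $\limsup\ln x_i(t)/t$ settles everything at once, here the boundary value ${\bar q}_i^{*}=0$ yields no decay of $x_i$ itself, and one must genuinely exploit the dissipation $-b_{i\inf}\int_0^t x_i\,ds$ via the substitution $y_i=\int_0^t x_i\,ds$ and the exponential integrating factor. Some care is also needed because $T_\eps$ and $C_\eps$ depend on the path of $M_i$ and are therefore random, but since all estimates hold on a single event of full probability this does not affect the almost-sure conclusion.
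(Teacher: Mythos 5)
Your proposal is correct and follows essentially the same route as the paper: the inequality $(\ref{eq23})$, the substitution $y_i(t)=\int_0^t x_i(s)\,ds$, the exponential integrating factor, and the final $\limsup\le\eps/b_{i\inf}$ step are exactly the paper's argument. The only (cosmetic) difference is that you absorb the drift and martingale into $h_i(t)$ and work with a random time $T_\eps$ on a set of full probability, whereas the paper fixes a deterministic $t_0$ on an event $\Omega_\eps$ with $\pr(\Omega_\eps)\ge 1-\eps$; your version is, if anything, the cleaner way to handle the almost-sure qualifier.
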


\begin{proof}
From the first equality in $(\ref{eq21})$ we have
\begin{align}\label{eq23}
\ln x_i(t)\le \ln x_{i0}+\int\limits_0^t q_{i}(s)ds
-b_{i\inf}\int\limits_0^tx_i(s)ds+M_i(t),\ i=1,2,
\end{align}
where martingales $M_i(t), i=1,2$ are defined in $(\ref{eq22})$. From the definition of ${\bar q}^{*}_i, i=1,2$ and the strong law of large numbers for $M_i(t), i=1,2$ it follows, that $\forall \eps>0$, $\exists t_0\ge0$, and $\exists \Omega_{\eps}\subset\Omega$, $\pr(\Omega_{\eps})\ge 1-\eps$ such that
\ba
\frac{1}{t}\int\limits_0^t q_{i}(s)ds\le{\bar q}^{*}_i+\frac{\eps}{2},\ \frac{M_i(t)}{t}\le\frac{\eps}{2},\ i=1,2,\ \forall t\ge t_0,\ \omega\in\Omega_{\eps}.
\nonumber
\end{align}

So, from $(\ref{eq23})$ we derive
\begin{align}\label{eq24}
\ln x_i(t)- \ln x_{i0}\le t({\bar q}^{*}_i+\eps)-b_{i\inf}\int\limits_0^tx_i(s)ds\nonumber\\=t\eps-b_{i\inf}\int\limits_0^tx_i(s)ds,\ i=1,2,\ \forall t\ge t_0,\ \omega\in\Omega_{\eps}.
\end{align}
Let $y_i(t) =\int_0^tx_i(s)ds, i=1,2$, then from $(\ref{eq24})$ we have for $i=1,2$
\begin{align}
\ln\left(\frac{dy_i(t)}{dt}\right)\le\eps t-b_{i\inf} y_i(t)+\ln x_{i0},\ \forall t\ge t_0,\ \omega\in\Omega_{\eps}.\nonumber\end{align}
Therefore
\ba
e^{b_{i\inf}y_i(t)}\frac{dy_i(t)}{dt}\le x_{i0}e^{\eps t},\ i=1,2,\ \forall t\ge t_0,\ \omega\in\Omega_{\eps}.
\nonumber
\end{align}
By integrating the last inequality from $t_0$ to $t$ we obtain
\begin{align}
e^{b_{i\inf}y_i(t)}\le \frac{b_{i\inf}x_{i0}}{\eps}\left(e^{\eps t}-e^{\eps t_0}\right)
+e^{b_{i\inf}y_i(t_0)},\ i=1,2,\ \forall t\ge t_0,\ \omega\in\Omega_{\eps}.
\nonumber
\end{align}
So
\begin{align}
y_i(t)\le \frac{1}{b_{i\inf}}\ln\left[e^{b_{i\inf}y_i(t_0)}+\frac{b_{i\inf}x_{i0}}{\eps}\left(e^{\eps t}-e^{\eps t_0}\right)
\right],\ i=1,2, \ \forall t\ge t_0,\ \omega\in\Omega_{\eps},
\nonumber
\end{align}
and therefore
\begin{align}
\limsup_{t\to\infty}\frac{1}{t}\int\limits_0^tx_i(s)ds\le\frac{\eps}{b_{i\inf}},\ i=1,2, \ \forall  \omega\in\Omega_{\eps}.
\nonumber
\end{align}
Since $\eps>0$ is arbitrary and $x_i(t)>0, i=1,2$ a.s., we have for $i=1,2$
\begin{align}\lim_{t\to\infty}\frac{1}{t}\int\limits_{0}^{t}x_i(s)ds=0\ a.s.\nonumber
\end{align}
\end{proof}

\begin{defin} (\cite{Liu1})
The population $x_i(t), i=1,2$ will be said weakly persistent in the mean if for every initial data $x_{i0}>0, i=1,2$, we have
\begin{align}
\limsup_{t\to\infty}\frac{1}{t}\int\limits_{0}^{t}x_i(s)ds>0,\ a.s.,\ i=1,2\nonumber
\end{align}
\end{defin}

\begin{thm}\label{thm7}
If ${\bar p}_2^{*}>0$, where $p_2(t)=-a_2(t)-\beta_2(t)$, then the predator population density $x_2(t)$ with the initial condition $x_{20}>0$ will be weakly persistence in the mean
\begin{align}
{\bar x}_2^{*}=\limsup_{t\to\infty}\frac{1}{t}\int_0^tx_2(s)ds>0\ \hbox{a.s.}
\nonumber
\end{align}
\end{thm}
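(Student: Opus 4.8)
The plan is to argue by contradiction, combining the logarithmic representation $(\ref{eq21})$ with the a priori bound of Lemma~\ref{lm1}. First I would specialize the first equality in $(\ref{eq21})$ to $i=2$. Since $(-1)^{i-1}=-1$ and $-a_2(s)-\beta_2(s)=p_2(s)$, this reads
\begin{align}
\ln x_2(t)=\ln x_{20}+\int\limits_0^t\left[p_2(s)+\frac{c_2(s)x_1(s)}{1+m(s)x_1(s)}-b_2(s)x_2(s)\right]ds+M_2(t),\nonumber
\end{align}
with $M_2(t)$ the martingale from $(\ref{eq22})$. The decisive observation is that the Holling term $c_2(s)x_1(s)/(1+m(s)x_1(s))$ is nonnegative, so it can simply be discarded in the favourable direction; bounding $b_2(s)\le b_{2\sup}$ as well, I obtain the lower estimate
\begin{align}
\frac{\ln x_2(t)}{t}\ge\frac{\ln x_{20}}{t}+\frac{1}{t}\int\limits_0^tp_2(s)ds-b_{2\sup}\frac{1}{t}\int\limits_0^tx_2(s)ds+\frac{M_2(t)}{t}.\nonumber
\end{align}

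Next I would use that $\langle M_2,M_2\rangle(t)\le Kt$ (exactly as computed just after $(\ref{eq22})$), so the strong law of large numbers for local martingales gives $M_2(t)/t\to0$ a.s. Then, assuming the conclusion fails, i.e. $\pr\{{\bar x}_2^{*}=0\}>0$, I note that because $x_2(s)>0$ the nonnegative running average $\frac{1}{t}\int_0^tx_2(s)ds$ has limit superior zero on the event $\{{\bar x}_2^{*}=0\}$, hence actually converges to $0$ there. Consequently, on this event of positive probability the first, third and fourth terms on the right-hand side of the estimate all tend to $0$, while $\frac{1}{t}\int_0^tp_2(s)ds$ has limit superior ${\bar p}_2^{*}>0$. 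Using that adding vanishing terms does not alter the limit superior, I would conclude
\begin{align}
\limsup_{t\to\infty}\frac{\ln x_2(t)}{t}\ge{\bar p}_2^{*}>0\nonumber
\end{align}
on a set of positive probability, which contradicts Lemma~\ref{lm1}. Therefore $\pr\{{\bar x}_2^{*}>0\}=1$, which is the claimed weak persistence in the mean.

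The step I expect to carry the real content is the discarding of the Holling functional response term: it is the only place where the coupling with $x_1(t)$ enters, and controlling that prey-dependent quantity directly would be awkward, so the proof hinges on its sign rather than its magnitude. Everything else is routine, the only delicate bookkeeping being the passage from $\limsup\frac{1}{t}\int_0^tx_2(s)ds=0$ to genuine convergence (valid since the integrand is positive) and the correct combination of one $\limsup$ term with three convergent ones.
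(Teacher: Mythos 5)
Your proposal is correct and follows essentially the same route as the paper: specialize the logarithmic identity \eqref{eq21} to $i=2$, discard the nonnegative Holling term, kill the $b_2x_2$ average on the event $\{{\bar x}_2^{*}=0\}$ (you make explicit, via $b_2(s)\le b_{2\sup}$, what the paper leaves implicit), apply the strong law of large numbers to $M_2$, and contradict Lemma~\ref{lm1}. No substantive difference.
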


\begin{proof}
If the assertion of theorem is not true, then $\pr\{\omega\in\Omega\ |\ {\bar x}_2^{*}=0\}>0$. From the first equality in $(\ref{eq21})$ we get
\begin{align}
\frac{1}{t}(\ln x_2(t)-\ln x_{20})+\frac{1}{t}\int_0^tb_2(s)x_2(s)ds=\frac{1}{t}\int_0^tp_2(s)ds\nonumber \\+\frac{1}{t}\int_0^t\frac{c_2(s)x_1(s)}{1+m(s)x_1(s)}ds+\frac{M_2(t)}{t}
\ge \frac{1}{t}\int_0^t p_2(s)ds+\frac{M_2(t)}{t},\nonumber
\end{align}
where martingale $M_2(t)$ is defined in $(\ref{eq22})$. For $\forall\omega\in\{\omega\in\Omega|\ {\bar x}_2^{*}=0\}$ in virtue of the strong law of large numbers for martingale $M_2(t)$ we have
\begin{align}
\limsup_{t\to\infty}\frac{\ln x_2(t)}{t}\ge {\bar p}_2^{*}>0.\nonumber
\end{align}
Therefore
\begin{align}
\pr\left\{\omega\in\Omega|\ \limsup_{t\to\infty}\frac{\ln x_2(t)}{t}>0\right\}>0.
\nonumber
\end{align}
But from Lemma \ref{lm1} we have
\begin{align}
\pr\left\{\omega\in\Omega\ |\ \limsup_{t\to\infty}\frac{\ln x_2(t)}{t}\le0\right\}=1.\nonumber
\end{align}
This is a contradiction.
\end{proof}

\begin{thm}
If ${\bar p}_1^{*}>0$ and ${\bar q}_2^{*}<0$, then the prey population density $x_1(t)$ with initial condition $x_{10}>0$ will be weakly persistence in the mean
\begin{align}
{\bar x}_1^{*}=\limsup_{t\to\infty}\frac{1}{t}\int_0^tx_1(s)ds>0\ \hbox{a.s.}
\nonumber
\end{align}
\end{thm}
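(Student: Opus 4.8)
The plan is to combine an extinction result for the predator with a contradiction argument for the prey, following the template of the proof of Theorem~\ref{thm7}. The condition ${\bar q}_2^{*}<0$ is precisely the hypothesis guaranteeing that the predator dies out, so the first step is to record this. Specializing the first equality in $(\ref{eq21})$ to $i=2$ and bounding $c_2(s)x_1(s)/(1+m(s)x_1(s))\le c_2(s)/m(s)$ gives
\begin{align}
\ln x_2(t)\le\ln x_{20}+\int_0^t q_2(s)ds+M_2(t),\nonumber
\end{align}
with $M_2(t)$ as in $(\ref{eq22})$. Since $\langle M_2,M_2\rangle(t)\le Kt$, the strong law of large numbers for local martingales yields $M_2(t)/t\to0$ a.s., whence $\limsup_{t\to\infty}t^{-1}\ln x_2(t)\le{\bar q}_2^{*}<0$, and therefore $\lim_{t\to\infty}x_2(t)=0$ a.s. This is exactly the extinction argument of Theorem~\ref{thm5} applied to the predator equation alone.

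Next I would use the decay of $x_2$ to neutralize the predation term in the prey equation. Since $0\le c_1(s)x_2(s)/(1+m(s)x_1(s))\le c_{1\sup}x_2(s)$ and the right-hand side tends to $0$ almost surely, the Ces\`aro-limit property of a nonnegative function that vanishes at infinity gives
\begin{align}
\lim_{t\to\infty}\frac{1}{t}\int_0^t\frac{c_1(s)x_2(s)}{1+m(s)x_1(s)}ds=0\quad\hbox{a.s.}\nonumber
\end{align}

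With these preparations, I would argue by contradiction in the manner of Theorem~\ref{thm7}. Suppose ${\bar x}_1^{*}=0$ on a set of positive probability; on that set the boundedness of $b_1$ gives $t^{-1}\int_0^t b_1(s)x_1(s)ds\le b_{1\sup}t^{-1}\int_0^t x_1(s)ds\to0$. Rewriting the first equality in $(\ref{eq21})$ for $i=1$ as
\begin{align}
\frac{\ln x_1(t)}{t}=\frac{\ln x_{10}}{t}+\frac{1}{t}\int_0^t p_1(s)ds-\frac{1}{t}\int_0^t\frac{c_1(s)x_2(s)}{1+m(s)x_1(s)}ds\nonumber\\-\frac{1}{t}\int_0^t b_1(s)x_1(s)ds+\frac{M_1(t)}{t},\nonumber
\end{align}
and letting $t\to\infty$, every term on the right except the $p_1$-average tends to $0$ (using $M_1(t)/t\to0$ by the strong law of large numbers), so on this set $\limsup_{t\to\infty}t^{-1}\ln x_1(t)\ge{\bar p}_1^{*}>0$. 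This forces $\pr\{\limsup_{t\to\infty}t^{-1}\ln x_1(t)>0\}>0$, contradicting Lemma~\ref{lm1}, and the theorem follows.

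The step I expect to be the main obstacle is the passage from pointwise extinction $x_2(t)\to0$ to a genuinely vanishing time average of the coupling term, so that the averaged predation pressure does not survive in the $\limsup$ of $t^{-1}\ln x_1(t)$. This is exactly where the hypothesis ${\bar q}_2^{*}<0$ does its work: without the extinction of the predator the quantity $t^{-1}\int_0^t c_1(s)x_2(s)/(1+m(s)x_1(s))ds$ could offset the positive growth rate ${\bar p}_1^{*}$ and invalidate the contradiction.
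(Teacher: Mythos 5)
Your argument is correct and follows essentially the same route as the paper's proof: invoke the extinction mechanism of Theorem~\ref{thm5} for the predator under ${\bar q}_2^{*}<0$, use the Ces\`aro average of $x_2$ to kill the predation term, and derive a contradiction with Lemma~\ref{lm1} from the identity $(\ref{eq21})$ for $i=1$ together with the strong law of large numbers for $M_1$. The only difference is that you spell out the predator-extinction step and the Ces\`aro passage explicitly, which the paper leaves to a citation of Theorem~\ref{thm5} and an implicit limit.
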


\begin{proof}
Let $\pr\{{\bar x}_1^{*}=0\}>0$. From the first equality in $(\ref{eq21})$ we get
\begin{align}\label{eq25}
\frac{1}{t}(\ln x_1(t)-\ln x_{10})+\frac{1}{t}\int_0^t b_1(s)x_1(s)ds=\frac{1}{t}\int_0^tp_1(s)ds\nonumber\\-\frac{1}{t}\int_0^t\frac{c_1(s)x_2(s)}{m(s)+x_1(s)}ds+\frac{M_1(t)}{t}\nonumber\\
\ge \frac{1}{t}\int_0^tp_1(s)ds-\frac{c_{1\sup}}{m_{\inf}t}\int_0^tx_2(s)ds+\frac{M_1(t)}{t}
\end{align}
where martingale $M_1(t)$ is defined in $(\ref{eq22})$. In virtue of the strong law of large numbers for martingale $M_1(t)$ and the result of Theorem \ref{thm5} for the predator population density $x_2(t)$ we obtain from $(\ref{eq25})$
\begin{align}
\limsup_{t\to\infty}\frac{\ln x_1(t)}{t}\ge {\bar p}_1^{*}>0\nonumber
\end{align}
for $\forall\omega\in\{\omega\in\Omega|\ {\bar x}_2^{*}=0\}$.
Therefore
\begin{align}
\pr\left\{\omega\in\Omega|\ \limsup_{t\to\infty}\frac{\ln x_1(t)}{t}>0\right\}>0.
\nonumber
\end{align}
But from Lemma \ref{lm1} we have
\begin{align}
\pr\left\{\omega\in\Omega|\ \limsup_{t\to\infty}\frac{\ln x_1(t)}{t}\le0\right\}=1.
\nonumber
\end{align}
Therefore we have a contradiction.
\end{proof}










\begin{thebibliography}{99}
\bibitem{Ian}
Iannelli, M., Pugliese, A.:
An Introduction to Mathematical Population Dynamics,
Springer, (2014).


\bibitem{Liu}
Liu, Zh., Shi, N., Jiang, D., Ji, Ch.:
The asymptotic behavior of a stochastic predator-prey system with Holling II functional response,
Abstract and Applied Analysis, vol. 2012, article ID 801812, (14 p.), (2012).


\bibitem{Bor1}
Borysenko, O.D., Borysenko, D.O.:
Persistence and extinction in stochastic nonautonomous
logistic model of population dynamics, Theory of Probability and Mathematical Statistics, {\bf 2(99)}, 63--70, (2018).

\bibitem{Bor2}
Borysenko, O.D., Borysenko, D.O.:
Asymptotic behavior of the solution to the
nonautonomous stochastic logistic differential
equation, Theory of Probability and Mathematical Statistics, {\bf 2(101)}, 40--48, (2019).

\bibitem{Bor3}
Borysenko, Olga, Borysenko, Oleksandr:
Stochastic two-species mutualism model with jumps, Modern Stochastics: Theory and Applications, 7(1), 1--15, (2020),
https://doi.org/10.15559/20-VMSTA150.

\bibitem{Bor4}
Borysenko, Olga, Borysenko, Oleksandr:
Long-time behavior of a nonautonomous stochastic predator-prey model with jumps, Modern Stochastics: Theory and Applications, 8(1), 17--39, (2021),
https://doi.org/10.15559/21-VMSTA173.

\bibitem{GikhSkor}
Gikhman, I.I., Skorokhod, A.V.:
{Stochastic Differential Equations and its Applications},
Kyiv, Naukova Dumka, (1982). (In Russian)

\bibitem{LiMao}
Li, X., Mao, X.:
{Population Dynamical Behavior of
Non-Autonomous Lotka-Volterra Competitive
System with Random Perturbation},
Discrete \& Continuous Dynamical Systems, {\bf24}, 523 -- 545, (2009).

\bibitem{Lip}
Lipster, R.:
{A strong law of large numbers for local martingales}, Stochastics, {\bf3}, 217--228, (1980).

\bibitem{Liu1}
Liu, M., Wanga, K.:
Persistence and extinction in stochastic non-autonomous logistic
systems, Journal of Mathematical Analysis and Applications {\bf 375}, 443--457, (2011).

\end{thebibliography}

\end{document}